\newtheorem{thmA}{Theorem}
\newtheorem{theorem}{Theorem}[section]
\newtheorem{proposition}[theorem]{Proposition}
\newtheorem{lemma}[theorem]{Lemma}
\newtheorem{corollary}[theorem] {Corollary}
\newtheorem{question}[theorem]{Question}
\newtheorem*{claim*}{Claim}
\theoremstyle{remark}
\newtheorem{remark}[theorem]{Remark}
\theoremstyle{definition}
\newtheorem{definition}[theorem]{Definition}
\def\calg{\mathcal{G}}
\def\calh{\mathcal{H}}
\def\calj{\mathcal{J}}
\def\Z{\mathbb Z}
\def\G{\Gamma}
\def\stab{{\rm{Stab}}}
\def\inn{{\rm{Inn}}}
\def\aut{{\rm{Aut}}}
\def\out{{\rm{Out}}}
\def\outgh{{\out(A_\Gamma;\calg,\calh^t)}}
\def\outogh{{\out^0(A_\Gamma;\calg,\calh^t)}}
\def\outghp{{\out^{[3]}(A_\Gamma;\calg,\calh^t)}}
\newcommand{\outghv}[1][v]{\out^{0}(\G_{#1};\calg_{#1},\calh_{#1}^t)}
\def\gln{{\rm{GL}}(n, \Z)}
\def\glm{{\rm{GL}}(m, \Z)}
\def\FR{{{\rm{FR}}_\mathcal{G}}}
\def\FRl{{{\rm{FR}}^{[3]}_\mathcal{G}}}
\def\gl{{\rm{GL}}}
\def\vcd{{\rm{vcd}}}
\def\cd{{\rm{cd}}}
\def\gd{{\rm{gd}}}
\def\<{\langle}
\def\>{\rangle}
\newcommand{\st}{\mathrm{st}}
\newcommand{\lk}{\mathrm{lk}}
\renewcommand{\setminus}{-}
\definecolor{olive}{rgb}{0,0.5,0}
\title[Calculating the vcd of $\aut(A_\G)$.]{Calculating the virtual cohomological dimension of the automorphism group of a RAAG.} 
\author{Matthew B. Day, Andrew W. Sale and Richard D. Wade}   
\begin{document}

\begin{abstract}
We describe an algorithm to find the virtual cohomological dimension of the automorphism group of a right-angled Artin group. The algorithm works in the relative setting; in particular it also applies to untwisted automorphism groups and basis-conjugating automorphism groups. The main new tool is the construction of free abelian subgroups of certain Fouxe-Rabinovitch groups of rank equal to their virtual cohomological dimension, generalizing a result of Meucci in the setting of free groups. 
\end{abstract}

\maketitle

\section{Introduction}
Automorphism groups of right-angled Artin groups (or RAAGs) form a diverse and interesting family, encompassing the rich worlds of both integer matrix groups and automorphism groups of free groups. For any right-angled Artin group $A_\G$, (determined by a finite graph $\G$) Laurence \cite{Laurence} gave a generating set for $\aut(A_\G)$, and since this result authors have worked to understand higher finiteness properties of these groups. In particular, Charney and Vogtmann  \cite{CVFiniteness} showed that each outer automorphism group $\out(A_\G)$ has finite \emph{virtual cohomological dimension (vcd)}. Given recent constructions of classifying spaces for \emph{untwisted  subgroups} \cite{MR3626599} and the analogs of \emph{congruence kernels} for these groups \cite{DW2}, it is natural to ask what $\vcd(\out(A_\G))$ actually is. Indeed, upper and lower bounds for specific examples and interesting subfamilies have been obtained in many cases \cite{BCV,MR3626599, DW2, MV}, giving the vcd when these bounds coincide.

In this paper we give an algorithm to compute the virtual cohomological dimension of $\out(A_\G)$ for an arbitrary graph $\G$. More generally, this algorithm gives the virtual cohomological dimension of any outer automorphism group of a right-angled Artin group relative to a collection of special subgroups. This includes the untwisted automorphism groups of \cite{MR3626599} and partially symmetric (or basis-conjugating) outer automorphism groups of RAAGs.

% When a discrete group has a finite-index, torsion free subgroup, its \emph{virtual cohomological dimension} is the cohomological dimension 
% 
% Automorphism groups of right-angled Artin groups (or RAAGs) form a diverse and interesting family, encompassing the rich worlds of both integer matrix groups and automorphism groups of free groups. For any right-angled Artin group $A_\G$, Laurence \cite{Laurence} gave a generating set for $\aut(A_\G)$, and since this result authors have worked to understand higher finiteness properties of these groups. In particular, Charney and Vogtmann  \cite{CVFiniteness} showed that each outer automorphism group $\out(A_\G)$ has finite \emph{virtual cohomological dimension (vcd)}. Given recent constructions of classifying spaces for \emph{untwisted  subgroups} \cite{MR3626599} and the natural analogs of \emph{congruence kernels} for these groups \cite{DW2}, it is natural to ask what $\vcd(\out(A_\G))$ actually is. Indeed, the construction of upper and lower bounds for specific examples and interesting subfamilies have been obtained in many cases \cite{BCV,MR3626599, DW2, MV}, giving the vcd when these bounds coincide.

The relative (outer) automorphism groups mentioned above were studied extensively in \cite{DW2}, and are affectionately known as RORGs. Such a group is defined by taking collections $\calg$, $\calh$ of \emph{special subgroups} (a special subgroup is one of the form $A_\Delta$ given by an induced
 subgraph $\Delta \subset \Gamma)$  of a right-angled Artin group $A_\G$ and looking at the subgroup $\out(A_\G;\calg,\calh^t)$ of automorphisms that \emph{preserve} each element of $\calg$ and \emph{act trivially} on each element of $\calh$ (see Section~\ref{sec:raag-background}). This approach is not an idle exercise in generalization if one wants to understand $\out(A_\G)$. The main result of \cite{DW2} uses RORGs to construct a subnormal series for $\out(A_\G)$ (more generally, for an arbitrary RORG) such that the consecutive quotients of this series are either finite, free-abelian groups, copies of $\gl(n,\mathbb{Z})$, or groups known as \emph{Fouxe-Rabinovitch groups}. We call such a normal series a \emph{decomposition series}. In \cite{DW2}, decomposition series were used to iteratively construct finite classifying spaces for congruence subgroups of $\out(A_\G)$. In a similar fashion, we will see that the virtual cohomological dimension of $\out(A_\G)$ is the sum of the vcds of the consecutive quotients appearing in a decomposition series.

To make this process algorithmic, one needs to know how to find the vcd of a Fouxe-Rabinovitch group. Let us first recall the definition of these groups. Let \[ G=G_1 \ast G_2 \ast \cdots G_k \ast F_m\] be a free factor decomposition of a group $G$. An element $\Phi \in \out(G)$ belongs to the \emph{Fouxe-Rabinovitch} group associated to this free factor decomposition if for each $G_i$ there exists a representative $\phi \in \Phi$ restricting to the identity on $G_i$. For example, the basis-conjugating automorphism group of a free group is the Fouxe-Rabinovitch group given by the free factor decomposition $F_n=\mathbb{Z} \ast \mathbb{Z} \ast \cdots \ast \mathbb{Z}$. Going back to RAAGs, if each $G_i=A_{\Delta_i}$ is a special subgroup, then the Fouxe-Rabinovitch group is the relative automorphism group $\out(A_\G;\{A_{\Delta_i}\}^t)$.

\begin{thmA}\label{th:vcdfr}
Let $A_\Gamma=A_{\Delta_1} \ast A_{\Delta_2} \ast \cdots \ast A_{\Delta_k} \ast F_m$ be a free factor decomposition of a  right-angled Artin group with $k\geq 1$. Let $d(\Delta_i)$ be the size of a maximal clique in each $\Delta_i$, and let $z(\Delta_i)$ be the rank of the center of $A_{\Delta_i}$. Then \[\vcd(\out(A_\G;\{A_{\Delta_i}\}^t))=(k+2m-2)\cdot\max_i\{d(\Delta_i)\} + \sum_{i=1}^k (d(\Delta_i)-z(\Delta_i)). \]There exists a free abelian subgroup of $\out(A_\G;\{A_{\Delta_i}\}^t)$  of rank equal to the virtual cohomological dimension.
\end{thmA}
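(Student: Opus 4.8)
The plan is to prove the equality by squeezing $\vcd(\out(A_\Gamma;\{A_{\Delta_i}\}^t))$ between matching upper and lower bounds equal to the claimed number; I will write
\[ R \;=\; (k+2m-2)\cdot\max_i\{d(\Delta_i)\} + \sum_{i=1}^k\bigl(d(\Delta_i)-z(\Delta_i)\bigr). \]
For the upper bound I would exploit the action of the Fouxe--Rabinovitch group on a contractible finite-dimensional complex---the spine of the deformation space of $(A_\Gamma,\{A_{\Delta_i}\})$-trees. After passing to a torsion-free finite-index subgroup the action is free and proper, so $\vcd$ is at most the dimension of the spine, and the task is to compute this dimension and see that it equals $R$. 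Heuristically $D:=\max_i d(\Delta_i)$ is the largest rank of a maximal torus carried by a vertex group, the coefficient $k+2m-2$ counts the twisting directions available in the underlying graph of groups (one vertex per factor $A_{\Delta_i}$ together with $m$ loops, with the $-2$ accounting for inner automorphisms and for the distinguished role of the hub factor), and the summand $\sum_i(d(\Delta_i)-z(\Delta_i))$ records the non-central clique directions internal to the factors. Because the lower bound will match, this also shows the bound is attained by a free abelian subgroup.

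The heart of the proof---and the new ingredient generalising Meucci---is the explicit construction of a free abelian subgroup of rank $R$. Fix a \emph{hub} factor $A_{\Delta_{i_0}}$ realising $d(\Delta_{i_0})=D$ and a maximal clique $\langle c_1,\dots,c_D\rangle\cong\Z^D$ inside it. I would assemble the subgroup from three families of pairwise-commuting automorphisms: (i) partial conjugations carrying a whole factor $A_{\Delta_j}$ to $c_\ell A_{\Delta_j}c_\ell^{-1}$ and fixing the remaining factors and free generators; (ii) inner automorphisms of a single factor $A_{\Delta_i}$ by its own clique elements---these lie in the Fouxe--Rabinovitch group, and only the non-central clique elements give nontrivial outer automorphisms, contributing exactly $d(\Delta_i)-z(\Delta_i)$ independent directions per factor; and (iii) the left and right transvections $t_r\mapsto c_\ell t_r$ and $t_r\mapsto t_r c_\ell$ of the free generators, which produce the factor of $2m$. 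Any two of these commute because conjugators sharing a common target are drawn from the abelian clique $\langle c_1,\dots,c_D\rangle$, while automorphisms acting on different factors or generators have disjoint support; one then keeps a maximal commuting subfamily by assigning a single conjugation source to each target.

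The main obstacle is the passage from $\aut(A_\Gamma)$ to $\out(A_\Gamma)$: the rank of the constructed subgroup in $\out$ is its rank in $\aut$ minus the rank of its intersection with $\inn(A_\Gamma)$, and it is precisely this intersection that produces the global $-2$ and interlocks with the factor centers. The key computation is that, since $A_\Gamma$ is a free product with trivial center, an inner automorphism $\ad(h)$ can be expressed through the chosen generators only when the free-product normal form of $h$ forces it into the hub clique modulo the center of a single factor; tracking the conjugating elements in normal form then establishes both that the generators are independent in $\aut(A_\Gamma)$ and exactly how much rank is lost on dividing by $\inn$. Optimising the assignment of conjugation sources to targets---so that the hub clique simultaneously conjugates the other factors and doubly serves the free generators while every factor still contributes its non-central clique directions---is what converts these two rank computations into the precise value $R$, and is the step I expect to be most delicate. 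Once the subgroup is shown to be free abelian of rank $R$, monotonicity of cohomological dimension under subgroups gives $\vcd\ge R$, which together with the upper bound proves the theorem and exhibits the required free abelian subgroup.
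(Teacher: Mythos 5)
Your lower-bound construction is essentially the one the paper uses (transvections of the free generators by a maximal clique of a ``hub'' factor of maximal dimension, partial conjugations of the other factors by that clique, and twists of each factor by its own non-central clique vertices, with the intersection with $\inn(A_\Gamma)$ costing exactly $d(\Delta_{i_0})$ of rank). But your upper bound contains a genuine error, and it is not a repairable slip: the action of the Fouxe--Rabinovitch group on the spine of relative Outer space is \emph{not} proper in the sense you need, and no finite-index subgroup acts freely on it. Vertex stabilizers contain the groups of twists of the corresponding splittings, which are (up to finite index) of the form $\oplus_{i=1}^k A_{\Delta_i}^{v_i}/Z(A_{\Delta_i})$ --- infinite torsion-free groups whenever the factors are nontrivial. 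Indeed, the rank-$R$ free abelian subgroup you construct for the lower bound fixes the rose vertex of the spine; being torsion-free, it meets every finite-index subgroup of $\FR$ in a free abelian group of rank $R$ that still fixes that vertex, so freeness can never be achieved by passing to finite index. Your heuristic that the dimension of the spine equals $R$ is also false: the spine has dimension $2m+k-2$, which is independent of the internal structure of the factors $A_{\Delta_i}$ and is in general far smaller than $R$.

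The missing idea is that the upper bound must be assembled from \emph{both} the dimension of the complex \emph{and} the dimension of the cell stabilizers, via a result such as Theorem~\ref{t:Geoghegan}: for a rigid contractible $G$-CW complex, $\gd(G)\leq\max_n\{n+d_n\}$, where $d_n$ bounds the geometric dimension of $n$-cell stabilizers. To make this give $R$ one needs the trade-off proved in Lemma~\ref{l:simplex}: the stabilizer in $\FRl$ of an $n$-simplex with top tree $T_n$ is a finite-index subgroup of $\oplus_{i=1}^k A_{\Delta_i}^{v_i}/Z(A_{\Delta_i})$, where $v_i$ counts edge orbits at the vertex fixed by $A_{\Delta_i}$, and an Euler-characteristic count on $T_n/A_\Gamma$ (every stabilizer-free vertex has valence at least $3$) forces
\[ \sum_{i=1}^k v_i \;\leq\; 2(m+k-1)-n, \]
so that cells of large dimension have stabilizers of small geometric dimension and every sum $n+d_n$ is at most $R$. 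Without this stabilizer analysis the spine gives only the (far too small) bound $2m+k-2$, and your squeeze argument does not close.
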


This generalizes a theorem of Meucci \cite{Meucci} on relative automorphism groups of free groups and Collins \cite{Collins} on partially symmetric automorphism groups. To prove this theorem, we obtain a lower bound for the virtual cohomological dimension by constructing free-abelian subgroups of the appropriate rank. The upper bound is obtained by a careful analysis of simplex stabilizers for the action of the Fouxe-Rabinovitch group on the spine of Guirardel and Levitt's \emph{relative Outer space} \cite{GL}. Roughly speaking, we have to make sure that simplices of large dimension have small stabilizers. 

In the case where $k=0$, the virtual cohomological dimension of $\out(F_m)$ was shown to be $2m-3$  in Culler and Vogtmann's seminal paper on Outer space \cite{CuV}. There, the lower bound is obtained by finding a copy of $\mathbb{Z}^{2m-3}$ in $\out(F_m)$ generated by Nielsen automorphisms. The abelian subgroups found in the Fouxe-Rabinovitch case are very similar and made up of transvections and partial conjugations (see Remark~\ref{remark:abelian_description}). On the other side of the RAAG spectrum, similar results hold for $\gl(n,\mathbb{Z})$. Here the virtual cohomological dimension is equal to the Hirsch length of the polycyclic subgroup of upper triangular matrices. Given all of this, it is natural to conjecture that for an arbitrary RORG there is also a polycyclic subgroup of rank equal to the virtual cohomological dimension. Indeed, this conjecture holds in all known examples, but we cannot prove it in general. Luckily, we do not need explicit polycyclic subgroups to calculate vcd.
 
\begin{thmA}\label{th:vcdrorg}
There is an algorithm which, given the input of a finite graph $\G$ and two collections of special subgroups $\calg$ and $\calh$ of $A_\G$, computes the virtual cohomological dimension of $\outgh$.
\end{thmA}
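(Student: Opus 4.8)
The plan is to assemble the algorithm from three ingredients already in hand: the decomposition series for a RORG produced in \cite{DW2}, the fact recorded in the introduction that $\vcd(\outgh)$ equals the sum of the vcds of the consecutive quotients of such a series, and the explicit formula of Theorem~A for the Fouxe--Rabinovitch quotients. Granting these, it suffices to describe a procedure that, from the finite combinatorial data $(\G,\calg,\calh)$, computes the decomposition series together with enough information about each quotient to evaluate its vcd, and then returns the sum.

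First I would argue that the construction of the decomposition series in \cite{DW2} is effective. The series is built by an inductive procedure each of whose steps is dictated by finite combinatorial data attached to induced subgraphs of $\G$ and to the finite lists $\calg,\calh$; in particular it terminates and outputs, for each consecutive quotient, its type among the four possibilities (finite, free abelian, $\gl(n,\Z)$, or Fouxe--Rabinovitch) together with the parameters needed to evaluate its vcd. For a free abelian quotient this is its rank; for a $\gl(n,\Z)$ quotient it is the integer $n$; and for a Fouxe--Rabinovitch quotient it is a free factor decomposition $A_{\Delta_1}\ast\cdots\ast A_{\Delta_k}\ast F_m$ of the relevant special subgroup. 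The last of these is itself read off combinatorially, since a free factor decomposition of a RAAG into freely indecomposable special factors and a free part corresponds to the decomposition of the defining graph into connected components, with isolated vertices contributing the $F_m$ factor and each non-trivial component $\Delta_i$ contributing $A_{\Delta_i}$.

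With the parameters in hand the vcd of each quotient is computed directly: a finite group contributes $0$; a free abelian group of rank $r$ contributes $r$; a copy of $\gl(n,\Z)$ contributes its classical value $\binom{n}{2}$; and each Fouxe--Rabinovitch quotient contributes the quantity given by Theorem~A when $k\geq 1$, namely \[(k+2m-2)\cdot\max_i\{d(\Delta_i)\}+\sum_{i=1}^k(d(\Delta_i)-z(\Delta_i)),\] and the Culler--Vogtmann value $2m-3$ in the remaining case $k=0$, where the quotient is $\out(F_m)$. Here the clique numbers $d(\Delta_i)$ and central ranks $z(\Delta_i)$ are elementary graph invariants of the $\Delta_i$. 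Summing these contributions over the series produces $\vcd(\outgh)$.

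The main obstacle is the first step: verifying that the decomposition series of \cite{DW2} is genuinely algorithmic, and in particular that the free factor decompositions labelling the Fouxe--Rabinovitch quotients are precisely those to which Theorem~A applies with the computed parameters. Since that series is described somewhat abstractly, the work lies in tracking, at each stage of the induction, exactly which special subgroups and which relative constraints from $\calg$ and $\calh$ survive, so that the graphs $\Delta_i$---and hence all invariants feeding Theorem~A---can be extracted effectively. The additivity of vcd across the series, whose upper bound comes from the Lyndon--Hochschild--Serre spectral sequence and whose matching lower bound rests on the free abelian subgroups supplied by Theorem~A, is then the remaining ingredient, and I would invoke it as established in the introduction.
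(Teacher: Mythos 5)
Your Steps 1 and 2 follow the paper's route: the decomposition tree of \cite{DW2} is made effective essentially as you describe (invariant special subgroups are detected via Proposition~\ref{p:invariant}, restriction maps are analysed on a finite generating set, and each leaf comes with the parameters needed for its vcd), and the leaf contributions are evaluated by the formulas you cite. The genuine gap is in your final step. The additivity of vcd across the series is \emph{not} ``established in the introduction''---the introduction only announces it (``We will see that\ldots'')---and it is not a consequence of the two bounds you sketch. The subadditive upper bound \eqref{leq} is fine, but your matching lower bound fails: Theorem~\ref{th:vcdfr} supplies free abelian subgroups of the Fouxe--Rabinovitch \emph{quotients}, not of $\outgh$ itself, and for an extension $1 \to N \to G \to Q \to 1$ there is in general no way to assemble subgroups realizing the vcds of $N$ and $Q$ into a polycyclic subgroup of $G$ of Hirsch length $\vcd(N)+\vcd(Q)$: lifts of the generators of the subgroup of $Q$ need not normalize (or even interact tamely with) the chosen subgroup of $N$. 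The paper is explicit that this is exactly what it cannot do---the existence of a polycyclic subgroup of a general RORG of rank equal to its vcd is stated as an open conjecture, followed by ``Luckily, we do not need explicit polycyclic subgroups to calculate vcd.'' Since integral cohomological dimension is genuinely only subadditive (Dranishnikov's examples \cite{Dra1} show strict inequality), your argument as written only yields an upper bound for $\vcd(\outgh)$, not its value.

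The paper closes this gap with rational cohomological dimension. By Bieri's theorem (Theorem~\ref{th:Bieri}), $\cd_\mathbb{Q}$ \emph{is} additive over exact sequences of groups of finite type. One passes to the level-3 congruence subgroup $\outghp$, which is torsion-free and of finite index, and uses \cite[Theorem~4.8]{DW2} (together with the splittings in Cases 2d and 2e) to show that every exact sequence in the decomposition tree restricts to an exact sequence of congruence subgroups; this produces a parallel subnormal series for $\outghp$ whose consecutive quotients are torsion-free, of finite type, and satisfy $\cd_\mathbb{Q}=\cd$---for free abelian and $\gl(n,\Z)$ leaves by the classical facts in Section~\ref{sec:cd}, and for Fouxe--Rabinovitch leaves precisely because of the free abelian subgroups of Theorem~\ref{th:vcdfr}, via Proposition~\ref{prop:cd-exact-sequence}. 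Induction up the tree then gives $\cd(\outghp)=\cd_\mathbb{Q}(\outghp)=\sum$ (leaf dimensions). So the free abelian subgroups of Theorem~\ref{th:vcdfr} enter not as a lower bound for the vcd of the whole group, but as the mechanism forcing each leaf to have equal rational and integral dimension, which is what makes Bieri's additivity applicable; some such device is indispensable, and it is the one ingredient your proposal is missing.
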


If one prefers to look at the absolute automorphism group, the vcds of $\aut(A_\G)$ and $\out(A_\G)$ differ by the dimension of $A_\G/Z(A_\G)$ (see Remark~\ref{rem:aut-out}).

The main idea behind the proof of Theorem~\ref{th:vcdrorg} is as follows.  Although virtual cohomological dimension is only subadditive with respect to exact sequences, \emph{rational cohomological dimension} is additive (by a theorem of Bieri \cite{Bieri}). It is not generally the case that the rational cohomological dimension of a torsion-free group agrees with its (integral) cohomological dimension (an example where they differ is given in the introduction of \cite{BM}). Luckily for us, the existence of the polycyclic subgroups above imply that for every consecutive quotient in a decomposition series, rational cohomological dimension coincides with vcd.  Therefore the vcd of a RORG is the sum of the vcds of the pieces that appear in its decomposition series, which we can find either from previous work in the literature \cite{BorelSerre, CuV}, or from Theorem~\ref{th:vcdfr}. 

\medskip

\paragraph*{\textbf{Related problems.}} The groups $\gl_n(\mathbb{Z})$ and $\out(F_n)$ are also \emph{virtual duality groups} in the sense of Bieri--Eckmann \cite{BE}. It is not known if this is the case for $\out(A_\G)$. As duality groups behave well under exact sequences \cite[Theorem~3.5]{BE}, for a positive answer it would be enough to show that every Fouxe--Rabinovitch group associated to a RAAG is a virtual duality group. This may not be true, or at least appears to be a delicate problem.

As mentioned above, we strongly suspect that $\out(A_\G)$ contains a polycyclic subgroup whose Hirsch length coincides with the vcd (this is most thoroughly discussed in a paper of Millard and Vogtmann \cite{MV}, which contains several positive results in the untwisted setting). It would also be desirable to have a closed formula for the virtual cohomological dimension in terms of (properties of) $\G$, which is something we cannot find with the recursive approach given in this paper.

\medskip

\paragraph*{\textbf{Structure of the paper.}} We describe the relevant background material on cohomological dimension and automorphism groups in Section~\ref{sec:background}. In Section~\ref{sec:vcd Fouxe-Rab} we give a proof of Theorem~\ref{th:vcdfr} and in Section~\ref{s:algorithm} describe how the decomposition series of a RORG  can be found algorithmically and complete the proof of Theorem~\ref{th:vcdrorg}.

\medskip

\paragraph*{\textbf{Acknowledgments.}} We would like to thank Conchita Mart\'inez-P\'erez and Nansen Petrosyan for very helpful conversations about cohomological dimension, and the referee for valuable feedback which helped improve the paper.

\section{Background}\label{sec:background}

\subsection{Cohomological dimension}\label{sec:cd}

For a thorough treatment of cohomological dimension, the reader is referred to the books of Bieri \cite{Bieri} and Brown \cite{Brown}.  Let $R$ be a unital commutative ring. For a group $G$, the \emph{cohomological dimension of $G$ over $R$}, denoted $\cd_R(G)$ is given by 
\[\cd_R(G)=\max \{ n : H^n(G;M)\neq 0 \textrm{ for some $RG$--module $M$}\}.  \]
The cohomological dimension of a group $G$ is given by $\cd(G)=\cd_\mathbb{Z}(G)$. The cohomological dimension satisfies $\cd_R(G)\leq \cd(G)$ for any ring $R$. A group $G$ is of \emph{finite type}, or \emph{of type $F$}, if $G$ is the fundamental group of an aspherical CW-complex with a finite number of cells. 
If $G$ is of finite type, then to find $\cd_R(G)$ one only needs to look at the cohomology with coefficients in the group ring $RG$, and  \[\cd_R(G)=\max \{ n : H^n(G;RG)\neq 0 \}.  \] If $1 \to N \to G \to Q \to 1 $ is an exact sequence of groups, then \begin{equation} \cd_R(G) \leq \cd_R(N) + \cd_R(Q). \label{leq}\end{equation} 
However, equality does not hold in general. For instance, Dranishnikov \cite{Dra1} constructed a family of hyperbolic groups $G_p$ such that $\cd(G_p)=3$ for all $p$, but $\cd(G_p\times G_q)=5$ whenever $p \neq q$. Roughly speaking, the failure of equality in \eqref{leq} comes from torsion in the top cohomology group (this is explored in detail in \cite{Dra2}). Over a field these difficulties disappear, so that one has the following:

\begin{theorem}[\cite{Bieri}, Theorem 5.5] \label{th:Bieri}
If $1 \to N \to G \to Q \to 1$ is an exact sequence of groups of finite type, then \[ \cd_{\mathbb{Q}} (G) = \cd_{\mathbb{Q}}(N) + \cd_{\mathbb{Q}}(Q). \]
\end{theorem}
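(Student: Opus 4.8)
The statement is the additivity of rational cohomological dimension over an extension, and I would prove it with the Lyndon--Hochschild--Serre (LHS) spectral sequence. The inequality $\cd_{\Q}(G) \le \cd_{\Q}(N) + \cd_{\Q}(Q)$ is exactly \eqref{leq} with $R = \Q$, so the real content is the reverse inequality. Write $a = \cd_{\Q}(N)$ and $b = \cd_{\Q}(Q)$. Since $N$ and $Q$ are of finite type they are of type $FP$ over $\Q$, so $a$ and $b$ are finite, the groups $H^a(N;\Q N)$ and $H^b(Q;\Q Q)$ are nonzero (by the finite-type characterization of $\cd_{\Q}$ recalled above), and, crucially, the functors $H^a(N;-)$ and $H^b(Q;-)$ commute with arbitrary direct sums of coefficient modules (one computes with a finite complex of finitely generated projectives). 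The plan is to exhibit a single $\Q G$-module $M$ with $H^{a+b}(G;M) \ne 0$, which forces $\cd_{\Q}(G) \ge a+b$.

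The natural choice is $M = \Q G$, fed into the cohomological LHS spectral sequence $E_2^{s,t} = H^s(Q;H^t(N;\Q G)) \Rightarrow H^{s+t}(G;\Q G)$, with attention on the corner term $(s,t) = (b,a)$. Because $a = \cd_{\Q}(N)$, the coefficient groups $H^t(N;\Q G)$ vanish for $t > a$, so every differential entering $E_r^{b,a}$ has zero source; because $b = \cd_{\Q}(Q)$, the functors $H^s(Q;-)$ vanish for $s > b$, so every differential leaving $E_r^{b,a}$ has zero target. Hence $E_2^{b,a} = E_\infty^{b,a}$, and since $E_\infty^{b,a}$ is a subquotient of $H^{a+b}(G;\Q G)$, it suffices to show the single $\Q$-vector space $E_2^{b,a} = H^b(Q;H^a(N;\Q G))$ is nonzero.

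To evaluate this I would first compute $H^a(N;\Q G)$ as a $\Q Q$-module. Choosing a set-theoretic section of $G \to Q$ presents $\Q G$ as a free left $\Q N$-module on a basis indexed by $Q$; since $N$ is $FP$ over $\Q$, cohomology commutes with the resulting direct sum, giving $H^a(N;\Q G) \cong \bigoplus_{Q} H^a(N;\Q N)$. The structural point, and the step I expect to be the main obstacle to write carefully, is that the natural $Q$-action (coming from conjugation on $N$ together with the $G$-action on $\Q G$) makes this isomorphic to $\Q Q \otimes_{\Q} H^a(N;\Q N)$ with $Q$ acting diagonally. An untwisting isomorphism $q \otimes v \mapsto q \otimes q^{-1}v$ then identifies the diagonal module with the free $\Q Q$-module on a $\Q$-basis of $H^a(N;\Q N)$, which is nonzero. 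Finally, since $Q$ is $FP$ over $\Q$ and $b = \cd_{\Q}(Q)$, the functor $H^b(Q;-)$ commutes with this direct sum and $H^b(Q;\Q Q) \ne 0$, so $H^b(Q;H^a(N;\Q G)) \cong \bigoplus H^b(Q;\Q Q) \ne 0$. This yields $E_2^{b,a} \ne 0$, hence $\cd_{\Q}(G) \ge a+b$, and combined with the upper bound completes the equality. Throughout, the delicate points are the bookkeeping of the $Q$-module structures on $H^a(N;\Q G)$ and the repeated use of the $FP$ hypothesis to interchange cohomology with infinite direct sums.
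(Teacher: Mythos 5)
Your proof is correct: the upper bound is the standard subadditivity, and the lower bound via the Lyndon--Hochschild--Serre spectral sequence---vanishing above the corner $(b,a)$, the untwisting of $H^a(N;\Q G)$ into a free $\Q Q$-module, and the interchange of cohomology with direct sums using the $FP$ hypothesis---is sound. The paper offers no proof of this statement (it is quoted verbatim from Bieri's book, Theorem 5.5), and your argument is essentially Bieri's original one, so the two approaches coincide.
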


Throughout this paper, we will be working with groups satisfying $\cd(G)=\cd_\mathbb{Q}(G)$, and will be able to make use of the following proposition.

\begin{proposition} \label{prop:cd-exact-sequence}
Let \[ 1 \to N \to G \to Q \to 1\] be an exact sequence of groups. Suppose that
 $N$ and $Q$ are groups of finite type, with \[ \cd_\mathbb{Q}(N)=\cd(N) \text{ and } \cd_\mathbb{Q}(Q)=\cd(Q), \] then \[\cd_\mathbb{Q}(G)=\cd(G)=\cd(N) + \cd(Q) .\]
\end{proposition}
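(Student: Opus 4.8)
The plan is to deduce the statement from a short squeeze argument: Bieri's theorem supplies exact additivity of cohomological dimension over the field $\mathbb{Q}$, the general subadditivity \eqref{leq} supplies an upper bound over $\mathbb{Z}$, and the universal comparison $\cd_\mathbb{Q} \leq \cd$ closes the gap between them. Before invoking any of this, I would first record that the ambient group $G$ is itself of finite type. Since the class of groups of type $F$ is closed under extensions, the hypotheses that $N$ and $Q$ are of finite type force $G$ to be of finite type as well; this is exactly what licenses the application of Theorem~\ref{th:Bieri} to the given sequence.

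With $G$ of finite type, Theorem~\ref{th:Bieri} applies and gives
\[ \cd_\mathbb{Q}(G) = \cd_\mathbb{Q}(N) + \cd_\mathbb{Q}(Q). \]
Substituting the hypotheses $\cd_\mathbb{Q}(N) = \cd(N)$ and $\cd_\mathbb{Q}(Q) = \cd(Q)$ then yields $\cd_\mathbb{Q}(G) = \cd(N) + \cd(Q)$. This already identifies the rational cohomological dimension of $G$; it remains only to pin the integral cohomological dimension $\cd(G)$ to the same value.

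For this I would use \eqref{leq} over the ring $R = \mathbb{Z}$, which gives the upper bound $\cd(G) \leq \cd(N) + \cd(Q)$, together with the universal inequality $\cd_\mathbb{Q}(G) \leq \cd(G)$, which gives the matching lower bound. Assembling these into a single chain,
\[ \cd(N) + \cd(Q) = \cd_\mathbb{Q}(G) \leq \cd(G) \leq \cd(N) + \cd(Q), \]
forces every inequality to be an equality, so that $\cd(G) = \cd_\mathbb{Q}(G) = \cd(N) + \cd(Q)$, as claimed.

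I do not expect any deep obstacle: all the genuine content sits inside Theorem~\ref{th:Bieri}, which is precisely the statement that additivity, which fails integrally (as the Dranishnikov examples show), is restored over a field. The only point requiring a moment's care is the closure of type $F$ under extensions, which guarantees that $G$ satisfies the finite-type hypothesis of Bieri's theorem; everything else is the formal squeeze above.
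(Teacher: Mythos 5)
Your proof is correct and takes essentially the same route as the paper: a squeeze combining Bieri's rational additivity (Theorem~\ref{th:Bieri}), the integral subadditivity \eqref{leq}, and the comparison $\cd_\mathbb{Q}(G)\leq\cd(G)$, forcing equality throughout. The only difference is that you explicitly verify $G$ is of finite type (closure of type $F$ under extensions) before applying Theorem~\ref{th:Bieri}, a hypothesis the paper's proof uses silently; this is a small but genuine gain in rigor, not a different argument.
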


\begin{proof}
By applying Theorem~\ref{th:Bieri} and equation \eqref{leq} we have \begin{align*} \cd(G) &\geq \cd_\mathbb{Q}(G)
\\ &=\cd_{\mathbb{Q}}(N) + \cd_{\mathbb{Q}}(Q) 
\\ &=\cd(N)+\cd(Q)
\\ &\geq \cd(G), \end{align*} so there is equality throughout.
\end{proof}

Any group with torsion has infinite cohomological dimension. However, if $G$ has a finite-index subgroup $H$ with finite cohomological dimension, then a theorem of Serre (\cite{Serre}, or alternatively \cite[VIII.3]{Brown}) asserts that for any other torsion-free finite-index subgroup $H'$ one has $\cd(H)=\cd(H')$. It follows that if $G$ contains a torsion-free subgroup of finite index, then the \emph{virtual cohomological dimension} of $G$ can be defined by \[ \vcd(G)=\{\cd(H) : H \text{ is torsion free and $[G:H] < \infty$} \}. \]

If $P$ is a torsion-free polycyclic group, then $\cd(P)=\cd_\mathbb{Q}(P)=h(P)$, where $h(P)$ is the \emph{Hirsch length} of $P$---the number of infinite cyclic factors in a normal series for $P$ (this follows from Proposition~\ref{prop:cd-exact-sequence}). If $P$ is a subgroup of a group $G$ then $\cd_R(P) \leq \cd_R(G)$, so polycyclic groups can be used to find lower bounds for (rational) cohomological dimension. In particular, one has:

\begin{proposition}\label{p:additive}
Suppose that $G$ acts properly and cocompactly on a contractible complex of dimension $d$ and contains a polycyclic subgroup $P$ with Hirsch length $h(P)=d$. Then for any finite-index, torsion-free subgroup $H$ of $G$, one has \[ cd_\mathbb{Q}(H)=\cd(H) = d.\] In particular, if $G$ has a finite-index torsion-free subgroup then $\vcd(G)=d$. \qed
\end{proposition}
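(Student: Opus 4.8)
The plan is to squeeze $\cd_\mathbb{Q}(H)$ between $d$ and $d$ using the two hypotheses separately: the contractible complex supplies the upper bound, and the polycyclic subgroup supplies the lower bound. Fix a torsion-free, finite-index subgroup $H \leq G$.

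First I would establish the upper bound by restricting the action of $G$ on the contractible $d$-dimensional complex $X$ to $H$. Properness of the $G$-action means every cell stabilizer is finite, and since $H$ is torsion-free these stabilizers become trivial, so $H$ acts freely on $X$. The cellular chain complex $C_*(X)$ is then a free resolution of $\mathbb{Z}$ over $\mathbb{Z}H$ concentrated in degrees $0$ through $d$, so $\cd(H) \leq d$; equivalently $X/H$ is a $d$-dimensional $K(H,1)$. Cocompactness together with $[G:H]<\infty$ makes $X/H$ a finite complex, so $H$ is moreover of type $F$. In particular $\cd_\mathbb{Q}(H) \leq \cd(H) \leq d$.

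Next I would produce the matching lower bound from $P$. The subgroup $P' := P \cap H$ has finite index in $P$, hence is again polycyclic with the same Hirsch length $h(P') = h(P) = d$, Hirsch length being a commensurability invariant. As a subgroup of $H$ it is torsion-free, so by the computation recorded above $\cd_\mathbb{Q}(P') = h(P') = d$. Monotonicity of rational cohomological dimension under the inclusion $P' \leq H$ then yields $\cd_\mathbb{Q}(H) \geq d$.

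Combining the two bounds gives $d \leq \cd_\mathbb{Q}(H) \leq \cd(H) \leq d$, so all three coincide and $\cd_\mathbb{Q}(H)=\cd(H)=d$. The final assertion is then immediate: whenever $G$ has a finite-index torsion-free subgroup, Serre's theorem guarantees that all such subgroups share one cohomological dimension, which we have just identified as $d$, so $\vcd(G)=d$. I do not expect a serious obstacle here; the only points needing care are checking that $H$ genuinely inherits the type-$F$ and torsion-free hypotheses required to invoke the earlier results, and recalling that passing to the finite-index subgroup $P \cap H$ preserves both torsion-freeness and Hirsch length.
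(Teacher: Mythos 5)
Your proof is correct and follows exactly the argument the paper intends (the proposition is stated with a \qed as an immediate consequence of the preceding discussion): the free action on the contractible $d$-dimensional complex gives $\cd(H)\leq d$, and the torsion-free finite-index subgroup $P\cap H$ of $P$, with $h(P\cap H)=h(P)=d$ and $\cd_\mathbb{Q}(P\cap H)=d$, gives the matching lower bound by monotonicity. The points you flag for care (torsion-freeness, type $F$, invariance of Hirsch length under passage to finite-index subgroups) are handled correctly.
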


Culler and Vogtmann \cite{CuV} use the spine of Outer space and the existence of a free abelian subgroup of rank $2n-3$ to show that for any torsion-free, finite-index subgroup $H$ of $\out(F_n)$ one has \[ \cd_\mathbb{Q}(H)=\cd(H)=2n-3. \] 
Similarly, by combining Borel and Serre's calculation of the vcd \cite{BorelSerre} with the upper-triangular matrices in $\gln$, we see that \[ \cd_\mathbb{Q}(H)=\cd(H)=\frac{n(n-1)}{2}, \]
for any torsion-free, finite index subgroup $H$ of $\gln$.

\subsection{RAAGs and RORGs} \label{sec:raag-background}

\subsubsection{Automorphism groups of RAAGs}

Let $A_\G$ be the right-angled Artin group determined by a finite graph $\G$.  Let us first fix names and notation for some common automorphisms:

\begin{itemize}
\item
\textbf{Graph symmetries.} 
Any automorphism of the graph induces an automorphism of the group via the corresponding permutation of the generating set. These elements of $\aut(A_\G)$ are called \emph{graph symmetries}.
\item
\textbf{Inversions.} If $v$ is a vertex of $\G$, then there is an \emph{inversion} $\iota_v$ that sends $v$ to $v^{-1}$ and fixes all other generators of $A_\G$.
\item
\textbf{Transvections.} Suppose $v$ and $w$ are distinct vertices of $\G$ with $\lk(v)\subset \st(w)$. 
There is a \emph{right transvection} $\rho^{w}_v$ which takes $v$ to $vw$ and fixes all other generators of $A_\G$. There is also a \emph{left transvection} $\lambda^{w}_v$ taking $v$ to $wv$ and fixing all other generators.
\item
\textbf{Extended partial conjugations.} Let $v$ be a vertex of $\G$ and let $K$ be a union of connected components of $\G - \st(v)$. There is an \emph{extended partial conjugation} $\pi^v_K$ which sends $w$ to $vwv^{-1}$ if $w$ is a vertex of $K$, and fixes each generator which is not a vertex of $K$.
\end{itemize}

By a theorem of Laurence \cite{Laurence}, the above automorphisms generate the whole automorphism group $\aut(A_\G)$. Given $\phi \in \aut(A_\G)$, we use $[\phi]$ or $\Phi$ to denote the outer automorphism represented by $\phi$. We will use the names of the automorphisms above to also describe their images in $\out(A_\G)$. This should be clear based on context; we will mostly be working in $\out(A_\G)$ below. 
Furthermore, we will often pass to the finite index subgroup $\out^0(A_\G)$ of $\out(A_\G)$ generated by inversions, transvections, and extended partial conjugations.

\subsubsection{Relative outer automorphism groups of RAAGs (RORGs)}

If $\Delta$ is a full subgraph of $\G$ we use $A_\Delta$ to denote the \emph{special subgroup} generated by the vertices contained in $\Delta$. An outer automorphism $\Phi$ of $A_\G$ \emph{preserves} $A_\Delta$ if there exists a representative $\phi \in \Phi$ that restricts to an automorphism of $A_\Delta$. An outer automorphism $\Phi$ \emph{acts trivially on $A_\Delta$} if there exists a representative $\phi \in \Phi$ acting as the identity on $A_\Delta$. 

\begin{definition}[RORGs]
If $\calg$, $\calh$ are collections of special subgroups of $A_\G$, then the \emph{relative outer automorphism group} (or \emph{RORG}), $\outgh$ consists of automorphisms that preserve each $A_\Delta \in \calg$ and act trivially on each $A_\Delta \in \calh$. 
\end{definition}

Similarly to the absolute case, we can define the finite-index subgroup
\[ \outogh := \outgh \cap \out^0(A_\G) \]
Laurence's theorem also extends to the relative setting:

\begin{theorem}[\cite{DW2}, Theorem~D]
The group $\outogh$ is generated by inversions, transvections, and extended partial conjugations.
\end{theorem}

For working with examples and, importantly for this paper, making a process algorithmic it is important for us to be able  to answer the following questions:

\begin{enumerate}
\item Which inversions, transvections, and partial conjugations are contained in $\outogh$? \label{q2}
\item If $A_\Delta$ is a special subgroup of $A_\G$, which inversions, transvections, and partial conjugations preserve $A_\Delta$? Which of these fix $A_\Delta$? \label{q1}
\item Given a special subgroup $A_\Delta$ (not necessarily in $\mathcal{G}$ or $\mathcal{H})$, is $A_\Delta$ preserved by $\outogh$? If so, is $A_\Delta$ invariant under $\outogh$? \label{q3}
\end{enumerate}

Note that if we can check \eqref{q1} for a generator $\Phi$ then we can also check \eqref{q2} by checking if $\Phi$ preserves every element of $\mathcal{G}$ and fixes every element of $\mathcal{H}$. Similarly, once we have a generating set for $\outogh$ we can check if another subgroup $A_\Delta$ is invariant or fixed under this group by running the check generator-by-generator. The conditions for $\eqref{q1}$ are covered by Lemma~2.2 of \cite{DW2}:

\begin{lemma}[\cite{DW2}, Lemma~2.2] \label{le:genexercise}
Let $A_\Delta$ be a special subgroup of $A_\G$ and let $[\iota_v]$, $[\rho^w_v]$, and $[\pi^v_K]$ be an inversion, transvection, and extended partial conjugation, respectively.
\begin{itemize}
 \item The inversion $[\iota_v]$ acts trivially on $A_\Delta$ if and only if $v \not \in \Delta$. It always preserves $A_\Delta$.
 \item The transvection $[\rho^w_v]$ acts trivially on $A_\Delta$ if and only if $v \not \in \Delta$. It preserves $A_\Delta$ if and only if $v \not \in \Delta$ or both $v,w \in \Delta$.
 \item The extended partial conjugation $[\pi^v_K]$ acts trivially on $\Delta$ if and only if \begin{equation} \label{eq:star} \tag{$\ast$}  K \cap \Delta = \emptyset \text{ or } \Delta - \st(v) \subset K.\end{equation} The subgroup $A_\Delta$ is preserved if and only if $\Delta$ satisfies \eqref{eq:star} or $v \in \Delta$.
\end{itemize}

\end{lemma}

Lemma~\ref{le:genexercise} is often enough for dealing with small examples in practice. We will finish this section by introducing some slightly more sophisticated language that is useful for applying Lemma~\ref{le:genexercise} (and hence answering questions \eqref{q2}--\eqref{q3}) when working with the theory in general or dealing with larger examples.

\begin{definition}[$\calj^v$, $\calj$-paths, and $\calj$-components]

Let $\calj$ be a collection of special subgroups of $A_\G$.  Given a vertex $v \in \G$ and a collection of special subgroups $\calj$ we define $\calj^v$ to be the subset of $\calj$ consisting of special subgroups that \emph{do not} contain $v$, so that:\[ \calj^v= \{A_\Delta \in \calj: v\not \in \Delta \}. \]
A \emph{$\calj$-path} in $\G$ is a sequence of vertices $v_1, \ldots,v_k$ of $\Gamma$ such that each pair $(v_i,v_{i+1})$ either span an edge of $\G$ or are contained in some common element of $\calj$. 
A \emph{$\calj$-component} of a subgraph $\Delta \subset \Gamma$ is a maximal subgraph $C \subset \Delta$ with the property that any two vertices in $C$ are connected by a $\calj$-path in $\Lambda$. Equivalently, we can obtain $\calj$-components of $\Delta$ by gluing up connected components of $\Delta$ when they both intersect the same element of $\calj$.
\end{definition}

We define the partial preorder $\leq_{(\calg,\calh)}$ on $V(\G)$ by saying that $v \leq_{(\calg,\calh)} w$ if and only if $\lk(v) \subset \st(w)$ and $v \not \in \calg^w \cup \calh$ (equivalently $[\rho^w_v] \in \outgh$).  Given a subgraph $\Delta \subset \G$ we say that $\Delta$ is \emph{upwards closed} under $\leq_{(\calg,\calh)}$ 
if $v \in \Delta$ and $v \leq_{(\calg,\calh)} w$ implies that $w \in \Delta$. For partial conjugations, Lemma~\ref{le:genexercise} implies that an extended partial conjugation $[\pi^v_K]$ is an element of $\outogh$ if and only if $K$ is a union of $\calg^v\cup\calh$ components of $\G - \st(v)$.
We say that $\Delta$ is \emph{$(\calg,\calh)$--star-separated} by a vertex $v$ if $\Delta$ intersects more than one $(\calg^v\cup \calh)$--component of $\G-\st(v)$. This is equivalent to the existence of an extended partial conjugation $[\pi_K^v] \in \outgh$ which acts on $A_\Delta$ as a non-inner automorphism. Putting all of this together with the fact that $A_\Delta$ is invariant (respectively, fixed) by every element of $\outogh$ if and only if $A_\Delta$ is invariant (respectively, fixed) by all of its generators, we have the following:

\begin{proposition}\label{p:invariant} Let $A_\Delta$ be a special subgroup of $A_\Gamma$.
\begin{itemize} \item $A_\Delta$ is invariant under $\outogh$ if and only if $\Delta$ is upwards closed under $\leq_{(\calg,\calh)}$ and $\Delta$ is not $(\calg,\calh)$--star-separated by a vertex $v \in \G - \Delta$.
 \item The group $\outogh$ acts trivially on $A_\Delta$ if and only if $v\leq_{(\calg,\calh)} w$ implies that $v=w$ for every $v \in \Delta$, the graph  $\Delta$ is not $(\calg,\calh)$--star-separated by \emph{any} vertex of $\Gamma$, and every element of $\Delta$ is contained in an element of $\mathcal{H}$.
\end{itemize}
\end{proposition}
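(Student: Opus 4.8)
The plan is to exploit the explicit generating set of $\outogh$ by inversions, transvections and extended partial conjugations. First I would observe that the outer automorphisms preserving $A_\Delta$ form a subgroup of $\outgh$, and likewise those acting trivially on $A_\Delta$ form a subgroup: if $\phi,\psi$ are representatives restricting to automorphisms of $A_\Delta$ (respectively, to the identity on $A_\Delta$), then so do $\phi\psi$ and $\phi^{-1}$. Consequently $A_\Delta$ is invariant under (resp. acted on trivially by) $\outogh$ if and only if each listed generator preserves (resp. acts trivially on) $A_\Delta$. The proof then reduces to determining, generator by generator, exactly when this happens, and translating the answer into the combinatorial conditions of the statement.

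For the first bullet I would run through the three families. An inversion $\iota_v$ always restricts to an automorphism of $A_\Delta$ (the identity if $v\notin\Delta$, and inversion of the $v$-coordinate if $v\in\Delta$), so inversions impose no condition. A transvection $\rho^v_w$ fixes $A_\Delta$ pointwise when $w\notin\Delta$ and restricts to a transvection of $A_\Delta$ when $w,v\in\Delta$; the only obstruction is the case $w\in\Delta$, $v\notin\Delta$, where the image $wv$ of $w$ escapes $A_\Delta$. Since $[\rho^v_w]\in\outogh$ precisely when $w\leq_{(\calg,\calh)}v$, requiring that all such transvections preserve $A_\Delta$ is exactly the statement that $\Delta$ is upwards closed under $\leq_{(\calg,\calh)}$. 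For an extended partial conjugation $\pi_C^v$ with $v\in\Delta$, the restriction to $A_\Delta$ is again a partial conjugation of $A_\Delta$, so $A_\Delta$ is preserved; when $v\notin\Delta$, adjusting by conjugation by $v^{-1}$ shows that $\pi_C^v$ preserves $A_\Delta$ as soon as $\Delta$ meets at most one $(\calg^v\cup\calh)$-component of $\G-\st(v)$, whereas if $\Delta$ meets two such components the generator $\pi_C^v$ conjugating one of them moves $A_\Delta$ off itself. Thus the partial conjugations in $\outogh$ all preserve $A_\Delta$ exactly when $\Delta$ is not $(\calg,\calh)$--star-separated by any vertex $v\in\G-\Delta$, which together with upwards closure gives the first bullet.

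For the second bullet I would repeat the analysis demanding the stronger conclusion that a representative restricts to the identity. An inversion $\iota_v\in\outogh$ acts trivially on $A_\Delta$ if and only if $v\notin\Delta$; since $[\iota_v]\in\outogh$ exactly when $v$ lies in no element of $\calh$, all admissible inversions act trivially precisely when every vertex of $\Delta$ lies in some element of $\calh$. This same hypothesis forces every transvection $\rho^v_w\in\outogh$ to have $w\notin\Delta$ (because $[\rho^v_w]\in\outogh$ requires $w\notin\calg^v\cup\calh$), so all such transvections fix $A_\Delta$ pointwise automatically. Finally, the partial-conjugation computation above shows that $\pi_C^v$ admits a representative restricting to the identity on $A_\Delta$ exactly when $\Delta$ meets at most one $(\calg^v\cup\calh)$-component of $\G-\st(v)$ --- now for every $v$, including $v\in\Delta$, since a partial conjugation by $v\in\Delta$ that moves only part of $\Delta$ is non-trivial. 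Hence triviality of the whole group requires that $\Delta$ be star-separated by no vertex of $\G$, and combining this with the $\calh$-condition yields the second bullet.

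The routine directions above are the ``if'' statements, where one writes down the adjusting inner automorphism explicitly. The main obstacle is the converse: showing that when a combinatorial condition fails, the offending generator genuinely fails to preserve (resp. fix) $A_\Delta$, i.e. that no choice of inner automorphism can repair it. For inversions this is immediate, since $v$ is not conjugate to $v^{-1}$ in a RAAG; but for the escaping transvection and the star-separating partial conjugation I would appeal to the convexity and rigidity of parabolic (special) subgroups of RAAGs, controlling the supports of conjugates of elements of $A_\Delta$ to rule out an inclusion $g\cdot\rho^v_w(A_\Delta)\cdot g^{-1}\subseteq A_\Delta$ (and the analogue for $\pi_C^v$) once $v\notin\Delta$ separates $\Delta$. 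This is precisely the content packaged in the quoted equivalence between star-separation of $\Delta$ by $v$ and the existence of an extended partial conjugation acting non-innerly on $A_\Delta$, and it is where genuine RAAG input, rather than bookkeeping with generators, is required.
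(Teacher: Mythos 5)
Your proposal is correct and follows essentially the same route as the paper, which gives no detailed argument but simply notes that the proposition "can be shown using the above generating set" (cf.\ Lemma~2.2 of the cited paper of Day--Wade): reduce to the generators of $\outogh$, check inversions, transvections and extended partial conjugations case by case, and use control of supports of conjugates (Servatius-type rigidity of special subgroups) to see that an offending generator cannot be repaired by an inner automorphism. Your generator-by-generator analysis matches the intended argument, including the key observation that partial conjugations with acting vertex $v\in\Delta$ always preserve $A_\Delta$ (which is why the first bullet only excludes star-separation by vertices of $\G-\Delta$).
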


\begin{proof}
We have $[\rho_v^w] \in \outogh$ if and only if $v \neq w$ and $v \leq_{(\calg,\calh)} w$. From Lemma~\ref{le:genexercise} it follows that $\Delta$ is upwards closed under $\leq_{(\calg,\calh)}$ if and only if $A_\Delta$ is invariant under every transvection in $\outogh$. Similarly, $A_\Delta$ is invariant under every extended partial conjugation in $\outgh$ if and only if $\Delta$ is not $(\calg,\calh)$--star-separated by a vertex $v \in \G - \Delta$. The argument for when $A_\Delta$ is fixed runs along similar lines, with the final condition ensuring that all of the inversions in $\outogh$ also fix $A_\Delta$.
\end{proof}

\section{The virtual cohomological dimension of a Fouxe-Rabinovitch group}\label{sec:vcd Fouxe-Rab}

In this section we use the relative outer space of Guirardel and Levitt to find the virtual cohomological dimension of a Fouxe-Rabinovitch group associated to a free factor decomposition of a right-angled Artin group.

\subsection{Fouxe-Rabinovitch groups and congruence subgroups}

Let $G=G_1 \ast G_2 \ast \cdots \ast G_k \ast F_m$ be a free factor decomposition of a group. We let $\calg=\{G_i\}$ and define the \emph{Fouxe-Rabinovitch group} associated to this free factor decomposition to be 
\[ \FR=\out(G;\mathcal{G}^t). \] This is the subgroup of $\out(G)$ acting trivially on each $G_i$ in the decomposition. We do not assume the free factor decomposition is maximal: it need not be the Grushko decomposition of $G$. We do, however, require that this free factor decomposition is nontrivial in the sense that $k \geq 1$ and $k+m \geq 2$. 

The level $3$ congruence subgroup of $\FR$ is defined in the same way as the subgroups of $\gl(n,\mathbb{Z})$ of the same name. It is the finite-index subgroup $\FRl$ acting trivially on $H_1(G;\mathbb{Z}/3\mathbb{Z})$. As the action of $\FR$ on each $G_i$ is trivial, this is the same as the subgroup acting trivially on $H_1(F_m;\mathbb{Z}/3\mathbb{Z})$ via the quotient map $\FR \to \out(F_m)$. If each $G_i$ and each $G_i/Z(G_i)$ is torsion-free, then so is each level $3$ congruence subgroup of $\FR$ (see \cite[Thoerem~5.2]{GL}).

\subsection{Relative Outer space}

Following the work of Guirardel and Levitt in \cite{GL2, GLDeformation}, we recall the definition of the spine of the relative outer space given by a free factor decomposition of a group. 

\begin{definition}
A \emph{Grushko $\calg$-tree} is a minimal action of $G$ on a simplicial tree $T$ with trivial edge stabilizers such that each element of $\calg$ is elliptic in $T$ and each vertex stabilizer is either trivial or conjugate to an element of $\calg$. Two Grushko $\calg$-trees $T_1$ and $T_2$ are equivalent if there is a $G$-equivariant homeomorphism $f: T_1 \to T_2$. 
\end{definition}

The set of Grushko $\calg$-trees forms a poset, where $T_1 < T_2$ if there is a ($G$-equivariant) subforest in $T_2$ which collapses to give the action of $G$ on $T_1$. The geometric realization of this poset is called the \emph{spine of relative Outer space} and we will denote it by $X_\calg$. 

\begin{figure}[ht]
%% Creator: Inkscape inkscape 0.92.3, www.inkscape.org
%% PDF/EPS/PS + LaTeX output extension by Johan Engelen, 2010
%% Accompanies image file '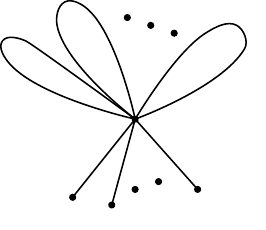' (pdf, eps, ps)
%%
%% To include the image in your LaTeX document, write
%%   \input{<filename>.pdf_tex}
%%  instead of
%%   \includegraphics{<filename>.pdf}
%% To scale the image, write
%%   \def\svgwidth{<desired width>}
%%   \input{<filename>.pdf_tex}
%%  instead of
%%   \includegraphics[width=<desired width>]{<filename>.pdf}
%%
%% Images with a different path to the parent latex file can
%% be accessed with the `import' package (which may need to be
%% installed) using
%%   \usepackage{import}
%% in the preamble, and then including the image with
%%   \import{<path to file>}{<filename>.pdf_tex}
%% Alternatively, one can specify
%%   \graphicspath{{<path to file>/}}
%% 
%% For more information, please see info/svg-inkscape on CTAN:
%%   http://tug.ctan.org/tex-archive/info/svg-inkscape
%%
\begingroup%
  \makeatletter%
  \providecommand\color[2][]{%
    \errmessage{(Inkscape) Color is used for the text in Inkscape, but the package 'color.sty' is not loaded}%
    \renewcommand\color[2][]{}%
  }%
  \providecommand\transparent[1]{%
    \errmessage{(Inkscape) Transparency is used (non-zero) for the text in Inkscape, but the package 'transparent.sty' is not loaded}%
    \renewcommand\transparent[1]{}%
  }%
  \providecommand\rotatebox[2]{#2}%
  \newcommand*\fsize{\dimexpr\f@size pt\relax}%
  \newcommand*\lineheight[1]{\fontsize{\fsize}{#1\fsize}\selectfont}%
  \ifx\svgwidth\undefined%
    \setlength{\unitlength}{127.41661132bp}%
    \ifx\svgscale\undefined%
      \relax%
    \else%
      \setlength{\unitlength}{\unitlength * \real{\svgscale}}%
    \fi%
  \else%
    \setlength{\unitlength}{\svgwidth}%
  \fi%
  \global\let\svgwidth\undefined%
  \global\let\svgscale\undefined%
  \makeatother%
  \begin{picture}(1,0.86302801)%
    \lineheight{1}%
    \setlength\tabcolsep{0pt}%
    \put(0,0){\includegraphics[width=\unitlength,page=1]{rose.pdf}}%
    \put(0.5990318,0.38080199){\color[rgb]{0,0,0}\makebox(0,0)[lt]{\lineheight{1.25}\smash{\begin{tabular}[t]{l}$G_1$\end{tabular}}}}%
    \put(0.22138247,0.03491505){\color[rgb]{0,0,0}\makebox(0,0)[lt]{\lineheight{1.25}\smash{\begin{tabular}[t]{l}$G_2$\end{tabular}}}}%
    \put(0.38951779,0.0069409){\color[rgb]{0,0,0}\makebox(0,0)[lt]{\lineheight{1.25}\smash{\begin{tabular}[t]{l}$G_3$\end{tabular}}}}%
    \put(0.77241267,0.06259761){\color[rgb]{0,0,0}\makebox(0,0)[lt]{\lineheight{1.25}\smash{\begin{tabular}[t]{l}$G_k$\end{tabular}}}}%
  \end{picture}%
\endgroup%

\caption{A rose with $m$ petals and $k-1$ leaves, whose Bass--Serre tree gives a vertex in the spine of relative Outer space.}
\label{fig:rose}
\end{figure}

By a theorem of Guirardel and Levitt \cite{GL2}, the spine $X_\calg$ is contractible. 
The spine admits an action of $\out(G;\calg)$ by precomposing the action of $G$ on a tree $T$ with the automorphism. We will only need the restriction of this action to $\FR=\out(G;\calg^t)$. 

Each $n$-simplex corresponds to a chain $T_0< T_1 < \cdots < T_n$ of Grushko $\calg$-trees. As the action of $\FR$ preserves the number of edge orbits in a Grushko $\calg$-tree, the action of $\FRl$ on $X_\calg$ is \emph{rigid} (any automorphism preserving a simplex fixes it pointwise). The lemma below uses ideas from the proof of Proposition~3.7 of \cite{GL} and gives a description of simplex stabilizers for the action.

\begin{lemma}\label{l:simplex}
Let $\sigma$ be a simplex in $X_\calg$ corresponding to the chain $T_0 < T_1 < \cdots < T_n$ of Grushko $\calg$-trees. Then the stabilizer of $\sigma$ in $\FRl$ is a finite-index subgroup of \[ \oplus_{i=1}^k G_i^{v_i} /Z(G_i), \] where $v_i$ is the number of $G_i$-orbits  of edges at the vertex fixed by $G_i$ in $T_n$ and $Z(G_i)$ is embedded in $G_i^{v_i}$ diagonally. Furthermore, \[ \sum_{i=1}^k v_i \leq 2(m + k -1) - n. \] 
\end{lemma}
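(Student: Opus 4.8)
The plan is to treat the two assertions in turn: first identify $\stab(\sigma)$ with a group of twists attached to the finest tree $T_n$, and then bound $\sum_i v_i$ by an Euler-characteristic count on the quotient graph of groups of $T_n$.

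First I would reduce to the stabilizer of a single tree. Because $\FRl$ acts rigidly, the stabilizer of $\sigma$ is the pointwise stabilizer $\bigcap_{i=0}^n \stab(T_i)$. Each $T_i$ is the collapse of $T_n$ along a canonical $G$-invariant subforest, and by the theory of Guirardel and Levitt the stabilizer of a Grushko $\calg$-tree inside $\FR$ is generated by twists. Since a twist of $T_n$ either descends to a twist of any collapse or becomes inner, we get $\stab(T_n) \subseteq \stab(T_i)$ for every $i$, and therefore $\stab(\sigma) = \stab(T_n)$. Thus it suffices to understand the stabilizer of the single finest tree.

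Next I would compute $\stab(T_n)$ following the proof of Proposition~3.7 of \cite{GL}. Only the vertices with nontrivial stabilizer contribute, since trivial vertex groups carry no twists. At the vertex fixed by $G_i$, which has $v_i$ incident orbits of edges, the twists assemble into a homomorphism $G_i^{v_i} \to \out(G)$ whose image lies in $\stab(T_n)$. Its kernel is the diagonally embedded copy of $Z(G_i)$: using that the centralizer of $G_i$ in the free product is exactly $Z(G_i)$, twisting every edge at the vertex by an element $z$ agrees with global conjugation by $z$ (hence is inner) precisely when $z$ is central, while a nondiagonal tuple is never inner. As twists based at distinct vertices commute and act independently, $\stab(T_n) = \oplus_{i=1}^k G_i^{v_i}/Z(G_i)$; intersecting with the finite-index subgroup $\FRl$ gives the asserted finite-index subgroup.

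For the inequality I would pass to the quotient graph of groups $T_n/G$, which is connected with first Betti number $m$, has exactly $k$ vertices of nontrivial stabilizer (of degrees $v_1, \dots, v_k$), and some number $V_0$ of trivial vertices, each of degree at least $3$ once valence-two vertices are removed up to equivariant homeomorphism. The Euler characteristic gives $E_n = m + k + V_0 - 1$ for the number of edge orbits, and the handshake count $\sum_i v_i + 3V_0 \le 2 E_n$ yields $\sum_i v_i \le 2(m+k-1) - V_0$. Finally, along the chain $T_0 < \cdots < T_n$ the number of edge orbits strictly increases, while the coarsest Grushko $\calg$-tree already has $m+k-1$ edge orbits, so $E_n \ge E_0 + n \ge (m+k-1) + n$; comparing with $E_n = m+k+V_0-1$ forces $V_0 \ge n$, and the stated bound follows. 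The main obstacle is the twist computation, namely pinning down the kernel of $G_i^{v_i} \to \out(G)$ as the diagonal center and confirming that there are no stabilizing automorphisms beyond the twists; the reduction to $T_n$ and the graph count are comparatively routine.
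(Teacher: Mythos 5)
Your Euler-characteristic count for the inequality is essentially the paper's own argument (the paper bounds the number $N\geq n$ of trivial-stabilizer vertex orbits, uses that each is at least trivalent, and applies the same handshake estimate), and that half of your proposal is fine. The gap is in the structural half. Both your reduction to $T_n$ and your identification of the stabilizer rest on the claim that ``the stabilizer of a Grushko $\calg$-tree inside $\FR$ is generated by twists,'' and that claim is false. The stabilizer of a tree $T$ in $\FR$ maps onto a group of symmetries of the quotient graph $T/G$, and these symmetries are genuinely realized in $\FR$: for $G=G_1\ast F_2=G_1\ast\langle a\rangle\ast\langle b\rangle$ and $T$ the Bass--Serre tree of the rose with two petals wedged at a vertex with group $G_1$, the automorphism fixing $G_1$ and swapping $a\leftrightarrow b$ lies in $\FR$ and stabilizes $T$, but it is not a twist. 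Such symmetric elements also wreck your descent step: an element of $\FR$ stabilizing $T_n$ but permuting edge orbits of $T_n/G$ need not preserve a given collapse $T_i$, so the inclusion $\stab(T_n)\subseteq\stab(T_i)$ fails in $\FR$. You flag ``confirming that there are no stabilizing automorphisms beyond the twists'' as the main obstacle, but in $\FR$ there \emph{are} such automorphisms, so no argument can confirm it at that level; intersecting with $\FRl$ only at the very end does not repair this, because your intermediate identifications are about $\FR$.

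What is missing is exactly where the congruence subgroup must enter, and it is the heart of the paper's proof: any element of $\FRl$ that stabilizes a Grushko $\calg$-tree $T$ acts \emph{trivially} on the quotient graph $T/G$. The paper proves this by observing that leaf vertices of $T/G$ have nontrivial stabilizers (hence conjugates of elements of $\calg$) and so are fixed, whence the induced graph symmetry gives a finite-order element of $\out(\pi_1(T/G))\cong\out(F_m)$; since the image of $\FRl$ in $\out(F_m)$ is torsion-free, this symmetry is trivial (with a separate remark when $T/G$ is a circle). This single statement does two jobs at once: it shows that elements of $\FRl$ fixing $T_n$ preserve every collapse, so $\stab_{\FRl}(\sigma)=\stab_{\FRl}(T_n)$, and it places $\stab_{\FRl}(T_n)$ inside $\stab^0_{\FR}(T_n)$, the subgroup acting trivially on $T_n/G$, which Levitt's theory of twists identifies with $\oplus_{i=1}^k G_i^{v_i}/Z(G_i)$. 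Your computation of the twist group itself (kernel of $G_i^{v_i}\to\out(G)$ equal to the diagonal center, twists at distinct vertices commuting) is consistent with that identification, but without the trivial-action-on-the-quotient-graph argument your proof never establishes that $\stab_{\FRl}(\sigma)$ is a subgroup of the twist group at all.
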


\begin{proof}
Firstly, we show that automorphisms in $\FRl$ preserving a Grushko $\calg$-tree $T$ act trivially on the quotient graph $T/G$, and therefore preserve all collapses of $T$.  
This can be seen since the leaf vertices in $T/G$ must have a non-trivial stabilizer in $G$, so must be in $\calg$. Hence the action fixes these vertices. Any such action induces a finite order element of $\out(\pi_1(T/G)) \cong \out(F_m)$. However the image of $\FRl$ in $\out(F_m)$ is also a level $3$ congruence subgroup and is torsion-free. Hence the action on $T/G$ must be trivial (note that if $T/G$ is a circle, all vertices are fixed).
As each $T_j$ is a collapse of $T_n$, any automorphism in $\FRl$ that fixes $T_n$ in $\FRl$ also fixes each $T_j$ with $j <n$, so that: 
\[ \stab_{\FRl}(\sigma) = \stab^0_{\FR}(T_n) \cap \FRl, \] 
where $\stab^0_{\FR}(T_n)$ denotes the stabilizer of  $T_n$ in $\FR$ that acts trivially on $T_n/G$. This is the \emph{group of twists} of the splitting \cite[Section 2.4]{Levitt05} and satisfies \[ \stab^0_{\FR}(T_n) \cong \oplus_{i=1}^k G_i^{v_i} /Z(G_i), \] where, as in the hypothesis, each $v_i$ is the number of $G_i$-orbits of edges in $T_n$ at the vertex fixed by $G_i$. 

It remains to justify the final inequality. Note that we have to collapse at least $n$ orbits of edges in $T_n$ to obtain $T_0$, so we may assume $T_n$ has $N \geq n$ orbits of vertices with trivial stabilizer. In total, the quotient graph $T_n/G$ has $N+k$ vertices and $N+k+m-1$ edges (as the fundamental group is $F_m)$. There are at least 3 half-edges adjacent to each of the $N$ vertices with trivial stabilizer. Subtracting these 3N half-edges from the total half-edge count gives:
\begin{align*} \sum_{i=1}^k v_i %&\leq 2E - 3N \\
 &\leq  2(N+k+m-1) - 3N \\
  &=2(m + k-1) - N \\
   &\leq 2(m + k - 1) - n, \end{align*} as required. 
\end{proof}

As each $v_i \geq 1$, the inequality in Lemma~\ref{l:simplex} shows that $n \leq 2m + k -2$. It is not hard to check that the dimension of the spine is equal to $2m+k-2$ by exhibiting a graph of groups decomposition of $G$ with $2m + k -2$  trivalent vertices with trivial stabilizers, trivial edge groups, and each nontrivial vertex group corresponding to a $G_i$ (see Figure~\ref{fig:bee}). 

\begin{figure}[ht]
\centering
\def\svgwidth{0.6\columnwidth}
%% Creator: Inkscape inkscape 0.92.4, www.inkscape.org
%% PDF/EPS/PS + LaTeX output extension by Johan Engelen, 2010
%% Accompanies image file '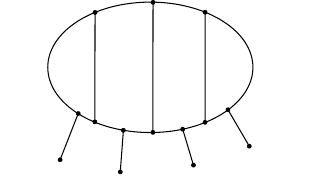' (pdf, eps, ps)
%%
%% To include the image in your LaTeX document, write
%%   \input{<filename>.pdf_tex}
%%  instead of
%%   \includegraphics{<filename>.pdf}
%% To scale the image, write
%%   \def\svgwidth{<desired width>}
%%   \input{<filename>.pdf_tex}
%%  instead of
%%   \includegraphics[width=<desired width>]{<filename>.pdf}
%%
%% Images with a different path to the parent latex file can
%% be accessed with the `import' package (which may need to be
%% installed) using
%%   \usepackage{import}
%% in the preamble, and then including the image with
%%   \import{<path to file>}{<filename>.pdf_tex}
%% Alternatively, one can specify
%%   \graphicspath{{<path to file>/}}
%% 
%% For more information, please see info/svg-inkscape on CTAN:
%%   http://tug.ctan.org/tex-archive/info/svg-inkscape
%%
\begingroup%
  \makeatletter%
  \providecommand\color[2][]{%
    \errmessage{(Inkscape) Color is used for the text in Inkscape, but the package 'color.sty' is not loaded}%
    \renewcommand\color[2][]{}%
  }%
  \providecommand\transparent[1]{%
    \errmessage{(Inkscape) Transparency is used (non-zero) for the text in Inkscape, but the package 'transparent.sty' is not loaded}%
    \renewcommand\transparent[1]{}%
  }%
  \providecommand\rotatebox[2]{#2}%
  \newcommand*\fsize{\dimexpr\f@size pt\relax}%
  \newcommand*\lineheight[1]{\fontsize{\fsize}{#1\fsize}\selectfont}%
  \ifx\svgwidth\undefined%
    \setlength{\unitlength}{154.68480748bp}%
    \ifx\svgscale\undefined%
      \relax%
    \else%
      \setlength{\unitlength}{\unitlength * \real{\svgscale}}%
    \fi%
  \else%
    \setlength{\unitlength}{\svgwidth}%
  \fi%
  \global\let\svgwidth\undefined%
  \global\let\svgscale\undefined%
  \makeatother%
  \begin{picture}(1,0.57801325)%
    \lineheight{1}%
    \setlength\tabcolsep{0pt}%
    \put(0,0){\includegraphics[width=\unitlength,page=1]{bee.pdf}}%
    \put(0.16537063,0.03044346){\color[rgb]{0,0,0}\makebox(0,0)[lt]{\lineheight{1.25}\smash{\begin{tabular}[t]{l}$G_1$\end{tabular}}}}%
    \put(0.38528822,0.01139534){\color[rgb]{0,0,0}\makebox(0,0)[lt]{\lineheight{1.25}\smash{\begin{tabular}[t]{l}$G_2$\end{tabular}}}}%
    \put(0.6097754,0.03296453){\color[rgb]{0,0,0}\makebox(0,0)[lt]{\lineheight{1.25}\smash{\begin{tabular}[t]{l}$G_3$\end{tabular}}}}%
    \put(0.78976913,0.10398234){\color[rgb]{0,0,0}\makebox(0,0)[lt]{\lineheight{1.25}\smash{\begin{tabular}[t]{l}$G_4$\end{tabular}}}}%
  \end{picture}%
\endgroup%

\caption{A graph of groups decomposition of $G$ with a maximal number of edge orbits in the case that $k=m=4$.}
\label{fig:bee}
\end{figure}

The above work allows one to bound the geometric dimension of a Fouxe-Rabinovitch group via the following theorem:

\begin{theorem}[\cite{MR2365352}, Theorem~7.3.3] \label{t:Geoghegan}
Let $X$ be a contractible, rigid $G$-CW complex with $\dim(X) \leq N$. For each $n$, suppose that the stabilizer of each $n$-cell in $X$ has geometric dimension at most $d_n$. Then G has geometric dimension \[ \gd(G) \leq \max\{d_n + n : 0 \leq n \leq N\}. \]
\end{theorem}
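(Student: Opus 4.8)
The plan is to realize a classifying space for $G$ as the Borel construction $EG \times_G X$ and to read its dimension off from the cell structure of $X$. First I would check that this is a model for $BG$: since $G$ acts freely on $EG$, the diagonal action on $EG \times X$ is free, and $EG \times X$ is contractible as a product of contractible spaces, so $(EG \times X)/G = EG \times_G X$ is a $K(G,1)$. (Equivalently, the projection $EG \times_G X \to BG$ is a fibration with contractible fibre $X$, hence a homotopy equivalence.) It therefore suffices to produce a CW model for $EG \times_G X$ of dimension at most $\max\{d_n + n : 0 \le n \le N\}$.

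The second step is to exploit \emph{rigidity}. For each orbit of $n$-cells of $X$ with representative $\sigma$ and stabilizer $G_\sigma$, rigidity forces $G_\sigma$ to fix $\sigma$ pointwise, so the corresponding equivariant cell is $G \times_{G_\sigma} \sigma$ with $G_\sigma$ acting \emph{trivially} on $\sigma$. Applying $EG \times_G(-)$ and using the standard identity $EG \times_G (G \times_{G_\sigma} \sigma) \cong EG \times_{G_\sigma} \sigma$ together with the fact that the restricted $G_\sigma$-action on $EG$ is free and contractible (so $EG/G_\sigma$ is a model for $BG_\sigma$), the contribution of this orbit becomes the genuine product $BG_\sigma \times \sigma$. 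Because $\gd(G_\sigma) \le d_n$, I may choose a CW model for $BG_\sigma$ of dimension at most $d_n$, so each orbit of $n$-cells contributes a piece $BG_\sigma \times D^n$ of dimension at most $d_n + n$.

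I would then assemble $EG \times_G X$ inductively along the skeletal filtration $EG \times_G X^{(0)} \subset EG \times_G X^{(1)} \subset \cdots \subset EG \times_G X^{(N)}$, passing from stage $n-1$ to stage $n$ by attaching the pieces $BG_\sigma \times D^n$ along their boundaries $BG_\sigma \times S^{n-1}$. Since every attached piece has dimension at most $d_n + n$, the resulting complex has dimension at most $\max\{d_n + n : 0 \le n \le N\}$, which is finite because $N$ bounds the top nonempty skeleton. Combined with the first step, this exhibits a $K(G,1)$ of the claimed dimension, giving $\gd(G) \le \max\{d_n + n : 0 \le n \le N\}$.

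The main obstacle will be the assembly step: turning the successive homotopy pushouts into an honest, finite-dimensional CW complex without inflating the dimension. The delicate point is that the attaching maps $BG_\sigma \times S^{n-1} \to EG \times_G X^{(n-1)}$ must be made cellular and compatible across orbits, and one must verify that the homotopy colimit over the poset of cells of $X/G$ is realized strictly within the stated bound rather than merely up to homotopy — this is exactly the careful inductive mapping-cylinder bookkeeping carried out in Geoghegan's treatment. It is worth stressing that rigidity is what keeps each layer in the clean product form $BG_\sigma \times D^n$; without it, $G_\sigma$ could permute or act nontrivially on $\sigma$, the product decomposition would break, and the dimension count $d_n + n$ would no longer be valid.
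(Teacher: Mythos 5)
This statement is not proved in the paper at all: it is imported verbatim as Theorem~7.3.3 of Geoghegan's book \cite{MR2365352} and used as a black box, so there is no in-paper argument to compare against. Your proposal is, in outline, the standard proof of that cited theorem, and indeed essentially Geoghegan's own: form the Borel construction $EG \times_G X$, use rigidity to identify the contribution of each orbit of $n$-cells as $EG \times_{G_\sigma} D^n \simeq BG_\sigma \times D^n$ with trivial $G_\sigma$-action on the disc, replace each $BG_\sigma$ by a model of dimension at most $d_n$, and rebuild skeleton by skeleton. The dimension bookkeeping you gesture at does work out: converting the successive homotopy pushouts into honest adjunctions via mapping cylinders over $Y_\sigma \times S^{n-1}$ costs one dimension on a space of dimension $d_n + n - 1$, landing exactly at $d_n + n$, so the bound is not inflated. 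The one caveat is that the step you defer to ``Geoghegan's treatment'' --- showing that the attaching maps can be rechosen compatibly after the lower stages have been replaced by smaller models, and that the resulting complex still has the homotopy type of $EG \times_G X$ --- is not a technicality but the entire content of the theorem (Geoghegan's Rebuilding Lemma); citing the source for it means your argument is a correct and well-structured reduction to that lemma rather than a self-contained proof. As a blind reconstruction of a result the paper itself only quotes, this is the right route, and the obstacle you flag is the genuine one.
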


We will apply this to the specific case of right-angled Artin groups below.

\subsection{Free product decompositions of RAAGs}

For a finite graph $\G$, we define $d(\Gamma)$ to be the size of the largest clique in $\G$. This is the same as the dimension of the Salvetti complex of $A_\G$. We define $z(\G)$ to be the number of vertices in $\G$ that are adjacent to every other vertex. This is the same as the rank of the center of $A_\G$, which is a finitely-generated free-abelian group. As $\FRl$ is finite-index in $\FR$, the next theorem and its corollary imply Theorem~\ref{th:vcdfr} from the introduction.

\begin{theorem}\label{th:vcd2}
Let $\FR$ be the Fouxe-Rabinovitch group associated to a nontrivial free factor decomposition  \[ A_\Gamma=A_{\Delta_1} \ast A_{\Delta_2} \ast \cdots \ast A_{\Delta_k} \ast F_m \] of a right-angled Artin group, and let $\FRl$ be its level 3 congruence subgroup. Then \[\gd(\FRl)=(k+2m-2)\cdot\max_i\{d(\Delta_i)\} + \sum_{i=1}^k (d(\Delta_i)-z(\Delta_i)). \] 
\end{theorem}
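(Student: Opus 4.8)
The plan is to compute $\gd(\FRl)$ by sandwiching it between matching upper and lower bounds, both extracted from the action of $\FRl$ on the spine $X_\calg$ of relative Outer space. The upper bound will come from feeding the simplex-stabilizer description of Lemma~\ref{l:simplex} into Theorem~\ref{t:Geoghegan}, and the lower bound from exhibiting a single, optimally-chosen vertex whose stabilizer already contains a free abelian group of the asserted rank. Since $\FRl$ is torsion-free and acts rigidly on the contractible complex $X_\calg$, both Theorem~\ref{t:Geoghegan} and the inequality $\gd \geq \cd$ are available.

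The first and main technical step is to determine the geometric dimension of the groups appearing in Lemma~\ref{l:simplex}. Each $\Delta_i$ decomposes as a graph join of the clique $\Delta_i^c$ of its $z(\Delta_i)$ cone vertices with the remaining graph $\Delta_i^0$, so that $A_{\Delta_i} = Z(A_{\Delta_i}) \times A_{\Delta_i^0}$ with $A_{\Delta_i^0}$ centerless and, by clique-counting, $d(\Delta_i^0) = d(\Delta_i) - z(\Delta_i)$. Absorbing the diagonal center into one $Z(A_{\Delta_i})$ factor gives
\[ A_{\Delta_i}^{v_i}/Z(A_{\Delta_i}) \;\cong\; \Z^{(v_i-1)z(\Delta_i)} \times A_{\Delta_i^0}^{v_i}, \]
a product of a torus with a RAAG. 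The product of a torus with $v_i$ copies of the Salvetti complex of $A_{\Delta_i^0}$ is then an explicit classifying space of dimension $(v_i-1)z(\Delta_i) + v_i d(\Delta_i^0) = v_i d(\Delta_i) - z(\Delta_i)$, and the evident free abelian subgroup $\Z^{v_i d(\Delta_i) - z(\Delta_i)}$ forces equality, so $\gd(A_{\Delta_i}^{v_i}/Z(A_{\Delta_i})) = v_i d(\Delta_i) - z(\Delta_i)$. Because the stabilizer of an $n$-simplex is a subgroup of $\oplus_i A_{\Delta_i}^{v_i}/Z(A_{\Delta_i})$, whose classifying space is the corresponding product, its geometric dimension $d_n$ is at most $\sum_i \bigl(v_i d(\Delta_i) - z(\Delta_i)\bigr)$.

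Next I would apply Theorem~\ref{t:Geoghegan} with $N = 2m+k-2$, giving $\gd(\FRl) \le \max_n\{d_n + n\}$. Using the bound $\sum_i v_i \le 2(m+k-1) - n$ from Lemma~\ref{l:simplex} together with $v_i \ge 1$, the problem reduces to maximizing $\sum_i v_i d(\Delta_i) + n$ over the polytope $\sum_i v_i + n \le 2(m+k-1)$, $v_i \ge 1$, $n \ge 0$ (the constant $-\sum_i z(\Delta_i)$ is carried along). After spending the forced budget $k$ on the floors $v_i = 1$, the remaining slack $2m+k-2$ is a linear program whose optimum, since every $d(\Delta_i)\ge 1$, is attained by loading all of it onto the coordinate $v_{i^*}$ with $d(\Delta_{i^*}) = \max_i d(\Delta_i)$ and taking $n=0$. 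This yields precisely $(k+2m-2)\max_i\{d(\Delta_i)\} + \sum_i\bigl(d(\Delta_i) - z(\Delta_i)\bigr)$.

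For the lower bound, the optimizing data corresponds to an actual vertex of $X_\calg$, namely the rose of Figure~\ref{fig:rose} with the maximal-clique factor $A_{\Delta_{i^*}}$ placed at the central vertex, so that $v_{i^*} = 2m+k-1$ and $v_i = 1$ otherwise. Its stabilizer $H$ is finite index in $\oplus_i A_{\Delta_i}^{v_i}/Z(A_{\Delta_i})$; intersecting the rank-$\sum_i(v_i d(\Delta_i)-z(\Delta_i))$ free abelian subgroup of the latter with $H$ preserves the rank, so $H \le \FRl$ contains a free abelian subgroup of rank equal to the asserted value, whence $\gd(\FRl) \ge \cd(\FRl) \ge (k+2m-2)\max_i\{d(\Delta_i)\} + \sum_i(d(\Delta_i)-z(\Delta_i))$, matching the upper bound. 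I expect the genuine obstacle to be the geometric-dimension computation of $A_{\Delta_i}^{v_i}/Z(A_{\Delta_i})$: this central quotient is not itself a RAAG, so its dimension must be realized through the explicit torus-times-Salvetti model rather than quoted from a general theorem, and care is needed that the diagonal center is absorbed correctly. By contrast, the identity $d(\Delta_i^0) = d(\Delta_i) - z(\Delta_i)$ and the final linear-programming optimization are routine.
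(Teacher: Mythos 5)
Your proposal is correct and takes essentially the same approach as the paper: the upper bound by feeding the stabilizer description of Lemma~\ref{l:simplex} into Theorem~\ref{t:Geoghegan}, and the lower bound by locating a free abelian subgroup of the asserted rank inside the stabilizer of the rose of Figure~\ref{fig:rose} with the maximal-dimension factor at the central vertex. Your explicit splitting $A_{\Delta_i} \cong Z(A_{\Delta_i}) \times A_{\Delta_i^0}$ and the linear-programming formulation are just unpacked versions of the paper's isomorphism $A_{\Delta_i}^{v_i}/Z(A_{\Delta_i}) \cong A_{\Delta_i}^{v_i-1}\oplus A_{\Delta_i}/Z(A_{\Delta_i})$ and its direct maximization estimate.
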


\begin{proof}
As above, let $X_\calg$ be the spine of relative Outer space and let $\sigma$ be a simplex of dimension $n$ given by the chain of trees $T_0 < T_1 \cdots < T_n$. By Lemma~\ref{l:simplex}, the stabilizer $\stab_\FRl(\sigma)$ of $\sigma$ is a finite index subgroup of \[ \oplus_{i=1}^k A_{\Delta_i}^{v_i}/Z(A_{\Delta_i}) \cong \oplus_{i=1}^k (A_{\Delta_i}^{v_i-1}\oplus A_{\Delta_i}/ Z(A_{\Delta_i})), \] and $\sum_{i=1}^k(v_i-1) \leq 2m+k-2-n.$ 
As the geometric dimension of $A_{\Delta_i}$ is $d(\Delta_i)$ and the geometric dimension of $A_{\Delta_i}/Z(A_{\Delta_i})$ is $d(\Delta_i)-z(\Delta_i)$, it follows that 
\begin{align*} \gd(\stab_\FRl(\sigma)) &\leq (2m+k-2-n)\cdot\max_i\{d(\Delta_i)\} + \sum_{i=1}^k (d(\Delta_i)-z(\Delta_i)) \\ &\leq [(2m+k-2)\cdot\max_i\{d(\Delta_i)\} + \sum_{i=1}^k (d(\Delta_i)-z(\Delta_i))] -n.\end{align*}
Therefore Theorem~\ref{t:Geoghegan} implies that \[ \gd(\FRl) \leq (2m+k-2)\cdot\max_i\{d(\Delta_i)\} + \sum_{i=1}^k (d(\Delta_i)-z(\Delta_i)). \] 
To establish equality it is enough to find a free abelian subgroup of $\FR$ of rank equal to the right hand side of this equation. If we reorder the vertices so that $A_{\Delta_1}$ has maximal dimension, we can find such a group inside the stabilizer of (the tree corresponding to) the rose given in Figure~\ref{fig:rose}. In this case, Lemma~\ref{l:simplex} tells us that the stabilizer of the $0$--cell given by the rose in $X_\calg$ under $\FRl$ is a finite index subgroup of \[ A_{\Delta_1}^{2m+k-1}/Z(A_{\Delta_1}) \oplus_{i=2}^k A_{\Delta_i}/Z(A_{\Delta_i}) \cong A_{\Delta_1}^{2m+k-2}  \oplus_{i=1}^k A_{\Delta_i}/Z(A_{\Delta_i}), \] which contains a free-abelian subgroup of the desired rank. 
\end{proof}

The following corollary is immediate from the proof:

\begin{corollary}
There exists a free abelian subgroup of rank equal to $\gd(\FRl)$, so that \[ \gd(\FRl)=\cd(\FRl)=\cd_\mathbb{Q}(\FRl). \]  \qed
\end{corollary}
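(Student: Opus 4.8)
The plan is to read off both assertions of the corollary directly from the proof of Theorem~\ref{th:vcd2}, combined with the elementary facts about cohomological dimension collected in Section~\ref{sec:cd}. First I would observe that the free abelian subgroup is already in hand: in establishing the lower bound for $\gd(\FRl)$, the proof of Theorem~\ref{th:vcd2} exhibits a free abelian group of rank $r = \gd(\FRl)$ sitting inside the $\FRl$-stabilizer of the rose of Figure~\ref{fig:rose}. Since that stabilizer is by construction a subgroup of $\FRl$, the resulting $\mathbb{Z}^r$ is itself a subgroup of $\FRl$, which is precisely the first assertion of the corollary.

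Next I would assemble a sandwich of inequalities around $r$. A free abelian group of rank $r$ is torsion-free polycyclic of Hirsch length $r$, so $\cd_\mathbb{Q}(\mathbb{Z}^r) = r$ by the discussion following Proposition~\ref{prop:cd-exact-sequence}. Monotonicity of rational cohomological dimension under passage to subgroups then gives $r \leq \cd_\mathbb{Q}(\FRl)$. On the other side, the inequality $\cd_R \leq \cd$ valid for any ring $R$, the standard bound $\cd \leq \gd$, and the value computed in Theorem~\ref{th:vcd2} yield $\cd_\mathbb{Q}(\FRl) \leq \cd(\FRl) \leq \gd(\FRl) = r$. Chaining these produces $r \leq \cd_\mathbb{Q}(\FRl) \leq \cd(\FRl) \leq \gd(\FRl) = r$, forcing equality throughout and hence $\gd(\FRl) = \cd(\FRl) = \cd_\mathbb{Q}(\FRl)$.

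I expect essentially no obstacle here, since all of the substantive content already lives in Theorem~\ref{th:vcd2} and the corollary is a formal consequence. The only point that warrants a moment's care is verifying that the free abelian subgroup produced in that proof is genuinely contained in $\FRl$ rather than merely in $\FR$; but because it was constructed inside the $\FRl$-stabilizer of the rose, this containment is automatic, and no further argument is needed.
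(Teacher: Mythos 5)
Your proposal is correct and matches the paper's reasoning: the paper declares the corollary ``immediate from the proof'' of Theorem~\ref{th:vcd2}, precisely because the free abelian subgroup of rank $\gd(\FRl)$ was constructed there inside the $\FRl$-stabilizer of the rose, and the chain $r \leq \cd_\mathbb{Q}(\FRl) \leq \cd(\FRl) \leq \gd(\FRl) = r$ is the intended (if unstated) sandwich. Your explicit verification that the subgroup lies in $\FRl$ rather than just $\FR$ is exactly the right point of care, and nothing further is needed.
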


\begin{remark}\label{remark:abelian_description}
The free abelian subgroup used in the proof of Theorem~\ref{th:vcd2} can be given quite explicitly. 
Firstly, order the factors of the decomposition so that $A_{\Delta_1}$ has maximal dimension and let $A_i$ be the vertices of a maximal clique in $\Delta_i$.
Let $X$ be the set of vertices generating the free factor $F_m$.
Then, take all left and right transvections $\rho_x^a$ and $\lambda_x^a$ for $x\in X$ and $a\in A_1$.
Adding the partial conjugations of the subgroups $A_{\Delta_j}$, for $j >1$, by elements of $A_1$ gives  a free abelian group of rank $(2m+k-1)\cdot d(\Delta_1)$ in $\aut(A_\G)$.

For each $i=1,\ldots,k$, and each vertex $v \in A_i$, add the partial conjugation $\pi^v_{\Delta_i}$. Such partial conjugations are trivial if $v$ is in the center of $A_{\Delta_i}$, and since a maximal clique in $\Delta_i$ must contain all vertices in the center, this gives us $d(\Delta_i)-z(\Delta_i)$ partial conjugations for each $i$.
One can check that all of automorphisms above generate a free abelian subgroup of $\aut(A_\Gamma)$ of rank 
$$(2m+k-1)\cdot d(\Delta_1) + \sum_{i=1}^k (d(\Delta_i)-z(\Delta_i)).$$

The only inner automorphisms that appear in the above group come from products of generators with acting letter $a\in A_1$, so that the intersection of this subgroup with the inner automorphisms has rank $d(\Delta_1)$. Subtracting this gives the rank in $\out(A_\G)$.
\end{remark}

\section{Calculating the vcd}\label{s:algorithm}

We now give the details of the algorithm to compute the vcd of a RORG. As a first step, we explain how the decomposition procedure for a RORG given in \cite{DW2} is algorithmic.

\subsection{Dismantling RORGs}\label{s:decomposition}

The finite-index subgroup $\outogh$ of a RORG can be broken up in the following way:

\begin{theorem}[{\cite[Theorem A]{DW2}}]\label{th:decomposition}
 The group $\outogh$ admits a subnormal series
 \[1=H_0 < H_1 < H_2 < \cdots < H_K = \outogh\]
 such that each quotient $Q_i=H_i/H_{i-1}$ is either:
 \begin{enumerate}\renewcommand{\theenumi}{$\mathcal{D}\arabic{enumi}$}
  \item\label{dec:ab} a finitely generated free abelian group,
  \item\label{dec:GL} isomorphic to $\glm$, or
  \item\label{dec:FR} a Fouxe-Rabinovitch group given by a free factor decomposition of a special subgroup of $A_\G$. \qed
 \end{enumerate}
% The same is true of $\outghp$, where each $Q_i$ is either:
%  \begin{enumerate}\renewcommand{\theenumi}{$\mathcal{D}\arabic{enumi}^\prime$}
%   \item\label{dec:abp} a finitely generated free abelian group,
%   \item\label{dec:GLp} a level 3 congruence subgroup of $\glm$, or
%   \item\label{dec:FRp} a level 3 congruence subgroup of a Fouxe-Rabinovitch group given by a free factor decomposition of a special subgroup of $A_\G$.
%  \end{enumerate}
\end{theorem}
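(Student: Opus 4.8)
The plan is to establish Theorem~\ref{th:decomposition} (which is \cite[Theorem A]{DW2}) by induction on a suitable complexity of the data $(\G;\calg,\calh)$, at each stage peeling off one of the three quotient types by applying the restriction homomorphism of \cite[Theorem E]{DW2} to a carefully chosen proper invariant special subgroup. The organizing principle is the preorder $\leq_{(\calg,\calh)}$ together with its equivalence classes: a transvection $[\rho^v_w]$ is available precisely when $w \leq_{(\calg,\calh)} v$, so the domination data records exactly which transvections occur, while the extended partial conjugations $[\pi^v_C]$ are governed by which vertices star-separate $\G$. I would first use Proposition~\ref{p:invariant} to locate a proper invariant special subgroup $A_\Delta$, for instance one generated by an upwards-closed, non-star-separated set of vertices.

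Applying \cite[Theorem E]{DW2} to such an $A_\Delta$ yields a short exact sequence
\[ 1 \longrightarrow K_\Delta \longrightarrow \outogh \xrightarrow{\ R_\Delta\ } I_\Delta \longrightarrow 1, \]
in which both the image $I_\Delta \leq \out(A_\Delta)$ and the kernel $K_\Delta$ (the automorphisms acting trivially on $A_\Delta$) are again relative outer automorphism groups, chosen so that each has strictly smaller complexity. By the inductive hypothesis each of $K_\Delta$ and $I_\Delta$ admits a decomposition series of the required form, and I would concatenate them: since $K_\Delta$ is normal in $\outogh$, lifting the series of $I_\Delta$ through $R_\Delta$ and stacking it above the series of $K_\Delta$ produces a subnormal series for $\outogh$ whose successive quotients are all of types \ref{dec:ab}, \ref{dec:GL}, or \ref{dec:FR}.

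The induction terminates at atomic groups admitting no such proper splitting, and for these one reads the three base cases directly off the generating set. A maximal equivalence class of mutually dominating, non-adjacent vertices carrying no partial conjugations contributes, via its transvections, inversions, and the permutations among them, a copy of $\glm$ (type~\ref{dec:GL}); strict one-directional domination $w <_{(\calg,\calh)} v$ contributes transvections that pairwise commute and so generate finitely generated free abelian quotients (type~\ref{dec:ab}); and when a vertex $v$ star-separates $\G$, the extended partial conjugations $\pi^v_C$ ranging over the $(\calg^v\cup\calh)$--components of $\G-\st(v)$ assemble into the Fouxe-Rabinovitch group of the corresponding free factor decomposition of a special subgroup (type~\ref{dec:FR}).

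The main obstacle is verifying that the series is genuinely subnormal with quotients of \emph{exactly} these three types rather than some hybrid. This rests on controlling the commutation relations among inversions, transvections, and partial conjugations finely enough to separate the ``$\glm$ part'' coming from mutual domination from the ``free abelian part'' coming from strict domination, and on checking that the kernel $K_\Delta$ really does split off a clean Fouxe-Rabinovitch factor determined by a free product decomposition of a special subgroup. Equally delicate is pinning down the complexity measure so that it strictly decreases in passing to both $K_\Delta$ and $I_\Delta$, guaranteeing that the induction is well-founded; this is where I expect the bulk of the bookkeeping to lie.
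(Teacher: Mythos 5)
This theorem is not actually proved in the paper: it is imported wholesale from \cite[Theorem~A]{DW2} (hence the \qed attached to the statement), and the closest in-paper analogue of a proof is Proposition~\ref{prop:algorithm for dec tree}, which makes the decomposition algorithmic. Your recursive skeleton --- locate an invariant special subgroup via Proposition~\ref{p:invariant}, apply the restriction sequence of \cite[Theorem~E]{DW2}, induct on a complexity that drops for both kernel and image, and concatenate the two subnormal series by pulling back the series of the image along $R_\Delta$ --- is exactly the structure of that argument, so at the level of architecture you are on the right track.

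The genuine gap is in your identification of the terminal cases, where two of your three identifications are wrong. First, a maximal class of mutually dominating, \emph{non-adjacent} vertices does not yield $\glm$: the transvections, inversions and permutations among $m$ pairwise non-adjacent, mutually dominating vertices generate (a finite-index subgroup of) $\out(F_m)$, which is strictly larger than $\gl(m,\mathbb{Z})$ for $m\geq 3$, and which the theorem accounts for as a quotient of type~\eqref{dec:FR} --- the paper says explicitly that $\out(F_m)$ arises via case~\eqref{dec:FR}, as the Fouxe-Rabinovitch group of a free factor decomposition with no nontrivial factors. The $\glm$ quotients of type~\eqref{dec:GL} arise at the opposite extreme, namely when $\G_v$ is a \emph{complete} graph, so that $A_{\G_v}\cong\mathbb{Z}^n$ and the RORG sits in an extension $1 \to A \to \outghv \to \glm \to 1$ with $A$ free abelian (the paper's Case~2e, citing \cite[Proposition~5.8]{DW2}). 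Second, you conflate star-separation with free factor decomposition: when $\G_v$ is \emph{connected} but star-separated by some vertex, the relevant extended partial conjugations commute and generate a finitely generated free abelian group, i.e.\ a quotient of type~\eqref{dec:ab}, not type~\eqref{dec:FR} (Cases~2b and~2c). Fouxe-Rabinovitch quotients occur precisely when $\G_v$ is disconnected \emph{and} $\calg_v$-disconnected, so that there is an honest free factor decomposition whose factors are the $\calg_v$-components (Case~2a).

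Two further omissions matter for making the induction run. The terminal cases are reached not when the group is ``atomic'' in an unspecified sense, but precisely when \emph{every} restriction map $R_\Delta$ has trivial image; and before testing this one must first \emph{saturate} $\calg_v$, replacing it by the collection of all special subgroups invariant under $\outghv$ (computable by Proposition~\ref{p:invariant}) --- without this step the clean case division by connectivity of $\G_v$, $\calg_v$-connectivity, and the size of the center $Z(A_{\G_v})$ does not go through. Your one partially correct identification (strict domination producing free abelian quotients) corresponds only to the leaf transvections of Case~2d; the other free abelian quotients come from star-separating vertices and partial conjugations (Cases~2b, 2c) and from the kernel $A$ in Case~2e, none of which your preorder-based description captures.
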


Note that $\out(F_m)$ may arise as a quotient via case~\eqref{dec:FR}. As in the introduction, we call such a subnormal series a \emph{decomposition series} for the group.

The most natural way to find the consecutive quotients in a decomposition series is to first build a \emph{decomposition tree} for $\outogh$. This is a rooted tree where every internal vertex is labelled by a group of the form $\out^{0}(\G_v;\calg_v,\calh_v)$, with $\G_v$ a subgraph of $\G$ and $\calg_v, \calh_v$ sets of special subgroups of $A_{\G_v}$. Our initial group is at the root.  Each internal vertex $G_v$ has two descendants $K_v$ and $I_v$ forming an exact sequence 
\[ 1 \to K_v \to G_v \to I_v \to 1.\]
Every leaf of this tree is labelled by a group of the form \eqref{dec:ab}, \eqref{dec:GL}, or \eqref{dec:FR} and one can show (e.g. using induction on the size of the tree) that the leaves of the tree give consecutive quotients in a subnormal series for the root. An example of such a tree is given in \cite[Figure~6]{DW2}. 

\begin{proposition}\label{prop:algorithm for dec tree}
There is an algorithm that produces a decomposition tree for $\outogh$.
\end{proposition}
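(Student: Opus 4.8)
The plan is to realize the inductive construction underlying Theorem~\ref{th:decomposition} as an explicit recursion on the finite data defining a RORG, verifying at each stage that every decision is a decidable combinatorial operation on a graph together with two finite lists of induced subgraphs. I would record a node $G_v=\out^{0}(\G_v;\calg_v,\calh_v)$ simply as the triple consisting of the subgraph $\G_v\subseteq\G$ and the two finite collections $\calg_v,\calh_v$; the root is the input triple $(\G,\calg,\calh)$. At a node one first tests whether $G_v$ already has one of the three forms \eqref{dec:ab}, \eqref{dec:GL}, \eqref{dec:FR}, in which case it becomes a leaf; otherwise one produces a \emph{proper} invariant special subgroup $A_\Delta$ and splits $G_v$ along the restriction homomorphism $R_\Delta$ of \cite[Theorem~E]{DW2}, recording as the two descendants the image $I_v=\im R_\Delta$ and the kernel $K_v=\ker R_\Delta$, which by that theorem are again RORGs and form the required exact sequence.

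The central point is that the hypotheses of Proposition~\ref{p:invariant} are algorithmically checkable, so the set of invariant special subgroups (and of those on which $G_v$ acts trivially) can be listed outright. Indeed, the preorder $\leq_{(\calg_v,\calh_v)}$ is computed directly from its definition: for each ordered pair of vertices $(w,u)$ one tests the inclusion $\lk(w)\subseteq\st(u)$ and membership of $w$ in the finitely many subgraphs comprising $(\calg_v)^{u}\cup\calh_v$. Whether a subgraph $\Delta$ is upwards closed is then a finite check against this relation. Star-separation by a vertex $u$ is decided by forming $\G_v-\st(u)$ and computing its $((\calg_v)^{u}\cup\calh_v)$-components; since these are the classes of the equivalence relation generated by $((\calg_v)^{u}\cup\calh_v)$-paths, they are found by a standard connectivity computation, after which one checks whether $\Delta$ meets more than one of them. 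As there are only finitely many induced subgraphs $\Delta\subseteq\G_v$, Proposition~\ref{p:invariant} yields the complete finite lists required, and the defining triples of $I_v$ and $K_v$ are read off from $(\G_v,\calg_v,\calh_v)$ and $\Delta$ exactly as in the construction of $R_\Delta$.

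It remains to make the leaf test decidable and to guarantee termination. Recognizing the three base cases \eqref{dec:ab}--\eqref{dec:FR} is again a combinatorial condition on $(\G_v,\calg_v,\calh_v)$: for instance a clique of mutually $\leq_{(\calg_v,\calh_v)}$-equivalent vertices, none lying in an element of $\calh_v$, produces a $\gl(m,\Z)$ quotient, while a free-product structure on which the group acts trivially produces a Fouxe-Rabinovitch quotient, and when no generator forces a $\gl$ or Fouxe-Rabinovitch factor the remaining transvections and inversions commute and the node is free abelian. Each such condition is tested with the same computations as above. For termination I would use the well-founded measure implicit in the proof of Theorem~\ref{th:decomposition}: along the image branch the graph strictly shrinks since $\Delta\subsetneq\G_v$, while along the kernel branch $\G_v$ is unchanged but $\calh_v$ is enlarged by $A_\Delta$, strictly decreasing the number of vertices not contained in any element of $\calh_v$, and no vertex ever leaves this set; ordering nodes lexicographically by the pair $(\,\lvert V(\G_v)\rvert,\ \lvert V(\G_v)\text{ not in any element of }\calh_v\rvert\,)$, every descent strictly decreases the measure, so the recursion halts with a finite tree whose leaves, by the discussion preceding the statement, give the consecutive quotients of a decomposition series.

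The step I expect to be the main obstacle is the leaf test together with the choice of $\Delta$: one must be certain that whenever $G_v$ is \emph{not} already a base case, Proposition~\ref{p:invariant} supplies a proper invariant $A_\Delta$ whose restriction has nontrivial image, so that a genuine proper piece is split off and the measure strictly decreases, faithfully reproducing the existence argument of Theorem~\ref{th:decomposition} by a terminating procedure rather than merely asserting it. Pinning down explicit decidable criteria that separate the three leaf types from the ``must recurse'' case, and matching them to the choices made in \cite{DW2}, is where the real work lies.
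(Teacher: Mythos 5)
You flag, at the end, the worry that whenever a node is not a base case there must exist a proper invariant $A_\Delta$ whose restriction map has nontrivial image --- and that worry is justified: the claim is false, and this is precisely the gap in your algorithm. The paper's proof, after first \emph{saturating} $\calg_v$ (a step you omit, and which is needed to invoke \cite[Theorem~E]{DW2}), splits the situation where \emph{all} restriction maps have trivial image into five subcases. Three of these (2a: $\G_v$ is $\calg_v$-disconnected; 2b: $\G_v$ disconnected but $\calg_v$-connected; 2c: $\G_v$ connected with trivial center) are genuine leaves of types \eqref{dec:FR}, \eqref{dec:ab}, \eqref{dec:ab}. But the other two are not leaves and cannot be split by any restriction map: when $\G_v$ is connected with proper nontrivial center (Case 2d), the node is split using the \emph{projection} homomorphism $P_\Delta$ onto $\out^{0}(A_\Delta;(\calg_v)_\Delta^t)$ with $\Delta=\G_v - Z(\G_v)$, whose kernel is the free abelian group of leaf transvections; and when $\G_v$ is complete, so $A_{\G_v}=\mathbb{Z}^n$ (Case 2e), the node is split using the exact sequence $1\to A\to \outghv \to \glm\to 1$ with $A$ free abelian. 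Neither of these is of the form $\ker R_\Delta \to G_v \to \im R_\Delta$, and such nodes are in general not of type \eqref{dec:ab}, \eqref{dec:GL} or \eqref{dec:FR} (think of a block-triangular situation over $\mathbb{Z}^n$). Your recursion, which only ever splits along restriction maps and otherwise declares a leaf, would therefore get stuck at exactly these nodes; your proposed leaf criteria (a clique of $\leq_{(\calg_v,\calh_v)}$-equivalent vertices, or ``the remaining transvections and inversions commute'') do not recognize them, and the latter criterion is not justified even for the true free abelian cases, which in the paper rest on \cite[Proposition~5.2]{DW2} and a Johnson-homomorphism argument.

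A secondary, repairable, problem is your termination measure. On the kernel branch you claim that adding $A_\Delta$ to $\calh_v$ strictly decreases the number of vertices not covered by elements of $\calh_v$. By the second bullet of Proposition~\ref{p:invariant}, $R_\Delta$ can have nontrivial image even when \emph{every} vertex of $\Delta$ already lies in some element of $\calh_v$, namely when some vertex $(\calg_v,\calh_v)$-star-separates $\Delta$; in that situation both coordinates of your lexicographic measure are unchanged and your induction does not close. One can fix this (for instance by counting special subgroups not contained in $\calh_v$, or by using the complexity of \cite[Theorem~5.9]{DW2}, which is what the paper does), but as stated the argument for termination is incomplete.
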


\begin{proof}
	The process for obtaining a tree is iterative. Given a vertex $v$ in the tree, labelled by $\outghv$, we describe below how to either
	\begin{enumerate}
		\item recognise $\outghv$ as a group of type \eqref{dec:ab}, \eqref{dec:GL}, or \eqref{dec:FR}, or
		\item extend the tree by adding two edges and two new descendants $v_1$ and $v_2$ of $v$, such that the group labelling $v_1$ and $v_2$ is either a RORG of lower complexity (see  \cite[Theorem 5.9]{DW2} for details on the complexity), or a group of type \eqref{dec:ab}, \eqref{dec:GL}, or \eqref{dec:FR}.
	\end{enumerate}
	If a new vertex has not been recognised as \eqref{dec:ab}, \eqref{dec:GL}, or \eqref{dec:FR}, then we repeat the process on this vertex.
	Because the complexity of RORGs decreases as we get further from the root, this algorithm will terminate.
	
	Given $\outghv$, the first step is to extend $\calg_v$ to its \emph{saturation} $\calg_v'$ relative to $(\calg_v,\calh_v)$, which is the collection $\calg_v'$ of all special subgroups that are invariant under $\outghv$.
	The invariant special subgroups can be determined from the input using Proposition~\ref{p:invariant}.
	
	Now assume that $\calg_v$ is saturated with respect to $(\calg_v,\calh_v)$. By \cite[Theorem~E]{DW2}, for each special subgroup $A_\Delta$, the image of $\outghv$ under the restriction map $R_\Delta$ is equal to $\out^{0}(A_\Delta;(\calg_v)_\Delta,(\calh_v)_\Delta^t)$,
	where \[(\calg_v)_\Delta = \{A_{\Delta \cap \Theta} \mid A_\Theta \in \calg_v \} \setminus \{ A_\Delta \},\] and $(\calh_v)_\Delta$ is defined similarly.
	This image is nontrivial if and only if there is an inversion, extended partial conjugation, or transvection with nontrivial image under $R_\Delta$. 
	This is a finite list of elements, and checking if each one has nontrivial image is a simple process. 
	
	We now divide into cases according to the nature of the images of restriction maps.
	
	\paragraph{\textbf{Case 1.}} There is a restriction map $R_\Delta$ with nontrivial image.\nopagebreak
	
	In this case we use the exact sequence
	\[
	\begin{split}
	1\to \out^{0}(A_{\G_v};\calg_v,(\calh_v\cup\{A_{\Delta}\})^t) \to  &\out^{0}(A_{\G_v};\calg_v,\calh_v^t) \\
	& \stackrel{R_{\Delta}}{\longrightarrow} \out^{0}(A_\Delta;(\calg_v)_\Delta,(\calh_v)_\Delta^t) \to 1.
	\end{split}
	\]
	given by \cite[Theorem~E]{DW2}. As per the proof of \cite[Theorem~5.9]{DW2}, the complexity of the RORG in the kernel and quotient is strictly lower than that of $\outghv$.
	In this case we add two new descendants below $v$ in the tree, with vertices labelled by the kernel and image above.
	
	\paragraph{{\bf Case 2.}} All restriction maps have trivial image. \nopagebreak
	
	As in \cite[Section~5]{DW2}, we can break into five subcases.

	\paragraph{\textbf{Case 2a.}} $\G_v$ is disconnected and $\G_v$ is $\calg_v$-disconnected.\nopagebreak
	
	Here  $\outghv$ is a Fouxe-Rabinovitch group where $F_m$ is a free group on $m$ isolated vertices not contained in any element of $\calg_v$ and the $A_{\Delta_i}$ are  the remaining $\calg_v$--connected components (\cite[Proposition~5.2]{DW2}).

	\paragraph{\textbf{Case 2b.}} $\G_v$ is disconnected and $\G_v$ is $\calg_v$-connected.\nopagebreak
	
	The vertices which $(\calg_v,\calh_v)$-star-separate form a complete graph $\Theta$ and, as per the proof of \cite[Proposition~5.2]{DW2}, $\outghv$ is a free-abelian group of rank equal to $|\Theta|$. 	
	
	\paragraph{\textbf{Case 2c.}} $\G_v$ is connected and the center $Z(A_{\G_v})$ of $A_{\G_v}$ is trivial.\nopagebreak
	
	In this case $\outghv$ is generated by commuting partial conjugations with acting letters $v$ that have $N_v$ $(\calg,\calh)$-connected components. It is not hard to check (e.g. using the first Johnson homomorphism \cite{W1}) that these elements form a free-abelian group of rank $\sum (N_v-1)$. 
	
	\paragraph{\textbf{Case 2d.}} $\G_v$ is connected and $Z(A_{\G_v})$ is a proper, nontrivial subgroup.\nopagebreak
	
	If $\Delta=\Gamma_v - Z(\Gamma_v)$ we apply \cite[Proposition~5.6]{DW2}.
	There is a projection homomorphism $P_\Delta$ with image $\out^{0}(A_\Delta;(\calg_v)_\Delta^t)$ (with $(\calg_v)_\Delta$ as defined above) whose kernel is a free abelian group with basis given by the \emph{leaf transvections} in $\outogh$.
	These are transvections $\rho^u_w$ with $w\in Z(\G)$ and $u \notin Z(\G)$, see \cite{CVSubQuot}. 
	We therefore add two descendants below $v$, one labelled by a free abelian group of the appropriate rank, and the other labelled by $\out^{0}(A_\Delta;(\calg_v)_\Delta^t)$.

	\paragraph{\textbf{Case 2e.}} $\G_v$ is complete and $A_{\G_v}=\mathbb{Z}^n$ for some $n$.\nopagebreak
	
	It is described in \cite[Proposition~5.8]{DW2} how the group fits in the exact sequence
	\[ 1\to A \to \outghv \to \glm \to 1, \] where $A$ is a finitely generated free abelian group of matrices, so that the rank is easy to compute. 
	We thus add two descendants below $v$, one labelled by $A$ and the other by $\glm$.	
\end{proof}

Note that the construction of a decomposition tree involves many choices, as at each step we only pick \emph{some} invariant special subgroup $A_\Delta$ for which there is a restriction map. 

\begin{question}
Does the set of consecutive quotients in a decomposition series depend on the set of choices made to dismantle $\outogh$? 
\end{question}

In this direction, Br\"uck \cite[Section 7]{Brueck} uses careful choices of restriction maps to construct a decomposition tree for $\outogh$ where the leaves can be described quite explicitly. As a trade-off, the leaves that appear in the decomposition tree of Br\"uck are slightly more general (there are groups generated by partial conjugations that are not necessarily of type \eqref{dec:ab}, \eqref{dec:GL}, or \eqref{dec:FR}).

\subsection{Completing the proof of Theorem~B} 

To complete the proof of Theorem~B, we describe how to compute the vcd of a RORG step-by-step:
\medskip

\paragraph*{\textbf{Step 1: Build a decomposition tree for $\outogh$.}} 

This is detailed in  Proposition~\ref{prop:algorithm for dec tree}.
\medskip

\paragraph*{\textbf{Step 2: Find the vcd of each leaf.}}

Each leaf is free-abelian, a copy of $\gl(n,\mathbb{Z})$, or a Fouxe-Rabinovitch group, so this can be read off via the calculations of Borel--Serre \cite{BorelSerre} and Culler--Vogtmann \cite{CuV} discussed in Section~\ref{sec:cd} and Theorem~\ref{th:vcdfr}.
\medskip

\paragraph*{\textbf{Step 3: Add the vcds of the leaves to find the vcd of the root}}

We do not need to explain how to carry out this step, but we should justify why it works. This is where the discussion of rational cohomological dimension given in Section~\ref{sec:cd} comes into play. The key point here is that we can restrict to the congruence subgroup $\outghp$ of $\outogh$. This is the torsion-free, finite-index subgroup given by the elements acting trivially on $H_1(A_\G,\mathbb{Z}/3\mathbb{Z})$. By \cite[Theorem~4.8]{DW2}, the short exact sequence \[
	\begin{split}
	1\to \out^{0}(A_{\G_v};\calg_v,(\calh_v\cup\{A_{\Delta}\})^t) \to  &\out^{0}(A_{\G_v};\calg_v,\calh_v^t) \\
	& \stackrel{R_{\Delta}}{\longrightarrow} \out^{0}(A_\Delta;(\calg_v)_\Delta,(\calh_v)_\Delta^t) \to 1,
	\end{split}
	\] coming from each projection map restricts to a short exact sequence 
	\[\begin{split}
	1\to \out^{[3]}(A_{\G_v};\calg_v,(\calh_v\cup\{A_{\Delta}\})^t) \to  &\out^{[3]}(A_{\G_v};\calg_v,\calh_v^t) \\
	& \stackrel{R_{\Delta}}{\longrightarrow} \out^{[3]}(A_\Delta;(\calg_v)_\Delta,(\calh_v)_\Delta^t) \to 1.
	\end{split}	\] for congruence subgroups. Similar behaviour happens with the projection maps that appear in Case 2d and Case 2e during the construction of the decomposition tree (one can see this as both of the projection maps split). As a result, one obtains an analogous decomposition tree for $\outghp$ where each vertex is a level three congruence subgroup of the corresponding vertex in the decomposition tree for $\outogh$. This gives a subnormal series
	 \[1=H_0 < H_1 < H_2 < \cdots < H_K = \outghp \] of $\outghp$ where the consecutive quotients are congruence subgroups of the leaves of the decomposition tree for $\outogh$ (and the leaves given by free-abelian groups are still free-abelian of the same rank). Some leaves, in particular those isomorphic to $\gl(1,\mathbb{Z})=\mathbb{Z}/2\mathbb{Z}$, will now be trivial. All of these groups are of finite type, and have rational cohomological dimension equal to their cohomological dimension (using either the discussion in Section~\ref{sec:cd} or Theorem~\ref{th:vcdfr}). By Bieri's theorem (Theorem~\ref{th:Bieri}) and Proposition~\ref{prop:cd-exact-sequence}, the sum of the (rational) cohomological dimensions of the leaves is equal to the cohomological dimension of $\outghp$, justifying the calculation of the vcd of $\outogh$ given above.

	 \begin{remark}\label{rem:aut-out}
	  The above work shows that the rational cohomological dimension of a (relative) outer automorphism group is the same as its cohomological dimension. As the inner automorphisms are isomorphic to $A_\G/Z(A_\G)$, the same is true for $\inn(A_\G)$. Bieri's theorem implies that the vcds of $\out(A_\G)$ and $\aut(A_\G)$ differ by the dimension of $A_\G/Z(A_\G)$.
	 \end{remark}

\bibliography{vcdbib}
\bibliographystyle{abbrv}

\end{document}